
\documentclass{amsart}

\usepackage{tikz}
\usepackage{amsmath}
\usepackage{amsthm}
\usepackage{amssymb}
\usepackage{mathtools}

\newcommand{\Q}{\mathbb{Q}}

\newcommand{\R}{\mathbb{R}}
\newcommand{\N}{\mathbb{N}}

\newcommand{\dom}{\operatorname{dom}}

\newcommand{\supp}{\operatorname{supp}}

\newcommand{\B}{\mathcal{B}}

\newcommand{\A}{\mathcal{A}}

\newcommand{\M}{\mathcal{M}}

\newcommand{\one}{\mathbf{1}}

\newcommand{\ol}{\overline}

\newtheorem{theorem}{Theorem}[section]
\newtheorem{lemma}[theorem]{Lemma}

\newtheorem{corollary}[theorem]{Corollary}

\newtheorem{proposition}[theorem]{Proposition}

\theoremstyle{definition}
\newtheorem{definition}[theorem]{Definition}
\newtheorem{question}[theorem]{Question}

\numberwithin{equation}{section}

\title{Effective weak and vague convergence of measures on the real line}
\author{Diego A. Rojas}
\address{Department of Mathematics\\
Iowa State University\\
Ames, Iowa 50011, United States}
\email{darojas@iastate.edu}

\begin{document}

\begin{abstract} 
    We expand our effective framework for weak convergence of measures on the real line by showing that effective convergence in the Prokhorov metric is equivalent to effective weak convergence. 
    In addition, we establish a framework for the study of the effective theory of vague convergence of measures. 
    We introduce a uniform notion and a non-uniform notion of vague convergence, and we show that both these notions are equivalent. 
    However, limits under effective vague convergence may not be computable even when they are finite. 
    We give an example of a finite incomputable effective vague limit measure, and we provide a necessary and sufficient condition so that effective vague convergence produces a computable limit. 
    Finally, we determine a sufficient condition for which effective weak and vague convergence of measures coincide. 
    As a corollary, we obtain an effective version of the equivalence between classical weak and vague convergence of sequences of probability measures.

    \smallskip
    \noindent \textbf{Keywords.} computable analysis, computable measure theory, effective weak convergence, effective vague convergence
\end{abstract}

\maketitle

\vspace{-0.3in}

\section{Introduction}

Recently, McNicholl and D. Rojas \cite{MR21} developed a framework to study the effective theory of weak convergence of measures on $\R$. 
Recall that a sequence of finite Borel measures $\{\mu_n\}_{n\in\N}$ on a separable metric space $X$ \emph{weakly converges} to a measure $\mu$ if, for every bounded continuous real-valued function $f$ on $X$, $\lim_n\int_Xfd\mu_n=\int_Xfd\mu$. 
Weak convergence is used extensively in probability theory, particularly in the study of optimization problems in stochastic dynamic programming \cite{B89} and controlled Markov processes \cite{BB96}. 
This paper partially serves as a continuation of \cite{MR21}. 

As seen in \cite{HR09b}, the space $\M(X)$ of finite Borel measures on a computable metric space $X$ forms a computable metric space when equipped with the Prokhorov metric. 
It is well known that the Prokhorov metric, introduced by Prokhorov \cite{P56} in 1956, metrizes the topology of weak convergence of measures. 
Thus, if a uniformly computable sequence in $\M(X)$ converges effectively in the Prokhorov metric, then the limit is a computable measure. 
This leads us to the following.

\begin{question}\label{qn:equiv}
    Are effective weak convergence and effective convergence in the Prokhorov metric equivalent?
\end{question}

In this paper, we provide a positive answer to this question in the case of $\M(\R)$. 
In the classical theory, the equivalence between weak convergence and convergence in the Prokhorov metric uses the Portmanteau Theorem, a characterization theorem for weak convergence originally due to Alexandroff \cite{A41}. 
Thus, the key to answering Question \ref{qn:equiv} is the effective Portmanteau Theorem (Theorem 5.1 in \cite{MR21}). 

Although not as commonly studied, another notion of convergence for sequences of measures in analysis and probability theory is \emph{vague convergence}. 
A sequence of (not necessarily finite) Borel measures $\{\mu_n\}_{n\in\N}$ on a separable metric space $X$ \emph{vaguely converges} to a measure $\mu$ if, for every compactly-supported continuous real-valued function $f$ on $X$, $\lim_n\int_Xfd\mu_n=\int_Xfd\mu$. 
If $\{\mu_n\}_{n\in\N}$ is a sequence of probability measures, then $\{\mu_n\}_{n\in\N}$ converges vaguely if and only if it converges weakly (see \cite{C01}). 
This fact is used in \cite{MTY09} to define an effective notion of convergence for probability measures on $\M(\R)$. 
However, this definition is not a suitable effective analogue to classical vague convergence for non-probability measures on $\M(\R)$.
We are thus led to the following.

\begin{question}
    What is a suitable definition of effective vague convergence?
\end{question}

Following \cite{MR21}, we propose two answers to this question, one of which is uniform (Definition \ref{def:uev}) and one of which is not (Definition \ref{def:ev}). 
As in the case of effective weak convergence, we show that these definitions are equivalent (Theorem \ref{thm:evc.equiv}). 
In contrast to effective weak convergence, we also show that effective vague convergence does not guarantee a computable limit even when the limit is finite.
Nevertheless, it is possible to obtain a computable limit under effective vague convergence.

As previously stated, classical weak and vague convergence coincide at sequences of probability measures. 
Along the same vein, we determine the point at which effective weak and vague convergence coincide (Theorem \ref{thm:evc2ewc}).
This yields an effective version of the correspondence between classical weak and vague convergence for probability measures (Corollary \ref{cor:evc2ewc.prob}). 

This paper is divided as follows. 
Section 2 consists of the necessary background in both classical and computable analysis and measure theory. 
Section 3 covers some preliminary material used in latter sections to state and prove the main results of this paper. 
In Section 4, we prove the equivalence between effective weak convergence and effective convergence in the Prokhorov metric in $\M(\R)$. 
In Section 5, we define effective notions of vague convergence in $\M(\R)$ and analyze the aforementioned consequences. 
We conclude in Section 6 with a discussion of the results and implications for future research in this direction.

\section{Background}

\subsection{Background from classical analysis}

In this paper, we denote the set of all continuous functions on $\R$ by $C(\R)$, the set of all bounded continuous functions on $\R$ by $C_b(\R)$, and the set of all compactly-supported continuous functions on $\R$ by $C_K(\R)$. 
We define the \emph{support} of a function $f\in C(\R)$ to be the set $\supp{f}=\ol{\R\setminus f^{-1}(\{0\})}$. 

For $x\in\R$ and $A\subseteq\R$, let $d(x,A):=\inf_{a\in A}|x-a|$. 
For $\epsilon>0$, we denote the open $\epsilon$-neighborhood of $a\in \R$ by $B(a,\epsilon)$.
For $\epsilon>0$ and $A\subseteq\R$, $B(A,\epsilon):=\bigcup_{a\in A}B(a,\epsilon)$ is called the open $\epsilon$-neighborhood of $A$.
We denote the Borel $\sigma$-algebra of $\R$ by $\B(\R)$. 
The Prokhorov metric $\rho$ is defined as follows: for any $\mu,\nu\in\M(\R)$,
\[
    \rho(\mu,\nu)=\inf\{\epsilon>0:(\forall A\in B(\R))[\mu(A)\leq\nu(B(A,\epsilon))+\epsilon\wedge\mu(A)\leq\nu(B(A,\epsilon))+\epsilon]\}.
\] 

\subsection{Background from computable analysis and computable measure theory}

We say that a bounded interval $I\subseteq\R$ is \emph{rational} if each of its endpoints is rational. Fix an effective enumeration $\{I_i\}_{i\in\N}$ of the set of all rational open intervals. 

An open set $U\subseteq\R$ is $\Sigma^0_1$ if $\{i\in\N:I_i\subseteq U\}$ is c.e.. 
Similarly, a closed set $C\subseteq\R$ is $\Pi^0_1$ if $\{i\in\N:I_i\cap C=\emptyset\}$ is c.e..
We denote the set of $\Sigma^0_1$ subsets of $\R$ by $\Sigma^0_1(\R)$, and we denote the set of $\Pi^0_1$ subsets of $\R$ by $\Pi^0_1(\R)$.
We say that $e \in \N$ \emph{indexes} $U \in \Sigma_1^0(\R)$ if $e$ indexes $\{i \in \N\ :\ I_i \subseteq U\}$.  
Indices of sets in $\Pi^0_1(\R)$ are defined analogously.  
A pair $(U,V)$ of $\Sigma_1^0$ sets is indexed by an $e \in \N$ if $U$ is indexed by $(e)_0$ and $V$ is indexed by $(e)_1$.

For a compact set $K\subseteq\R$, a \emph{(minimal cover) name} for $K$ is an enumeration of all minimal finite covers of $K$. We say $K$ is \emph{computably compact} if it has a computable name. An index of $K$ is defined to be an index of a name of $K$.

If $I$ is a rational compact interval, let $P_{\Q}(I)$ denote the space of polygonal functions on $I$ with rational vertices; we will refer to these functions as \emph{rational polygonal functions} on $I$.
When $p\in P_{\Q}[a,b]$,  we may extend $p$ to all of $\R$ by letting $p(x)=p(a)$ for $x<a$ and $p(x)=p(b)$ for $x>b$. 
We may also extend $p$ to be supported on a computably compact set.

Fix a real number $x$.  A \emph{(Cauchy) name} of $x$ is a sequence $\{q_n\}_{n \in \N}$ of rational numbers so that 
$\lim_n q_n = x$ and so that $|q_n - q_{n+1}| < 2^{-n}$ for all $n \in \N$.  

When $I\subseteq\R$ is a compact rational interval and $J\subseteq\R$ is a rational open interval, we let $N_{I,J}=\{f\in C(\R):f[I]\subseteq J\}$. 
A \emph{(compact-open) name} of a function $f\in C(\R)$ is an enumeration of $\{N_{I,J}:f\in N_{I,J}\}$.  
If $f \in C(\R)$, then $f$ is computable if and only $f$ has a computable name.

Fix $x \in \R$. We say $x$ is \emph{computable} if it has a computable name.  An index of such a name is also said to be an index of $f$. We say $x$ is \emph{left-c.e.} (\emph{right-c.e.}) if its left (right) Dedekind cut is c.e..  It follows that $x$ is computable if and only if it is left-c.e. and right-c.e.. A sequence $\{x_n\}_{n \in \N}$ is computable if $x_n$ is computable uniformly in $n$.

A function $f:\subseteq\R\rightarrow\R$ is \emph{computable} if there is a Turing functional $F$ so that
$F(\rho)$ is a name of $f(x)$ whenever $\rho$ is a name of $x$.  An index of such a functional $F$ is also said to be an index of $f$.  We denote the set of all bounded computable functions on $\R$ by $C_b^c(\R)$. We denote the set of all compactly-supported computable functions on $\R$ by $C_K^c(\R)$.

A function $f: \subseteq \R \rightarrow \R$ is \emph{lower semi-computable} if there is a Turing functional $F$ so that 
$F(\rho)$ enumerates the left Dedekind cut of $f(x)$ whenever $\rho$ is a name of $x$.
A function $f: \subseteq \R \rightarrow \R$ is \emph{upper semi-computable} if $-f$ is lower semi-computable.

A function $F:\subseteq C(\R)\rightarrow\R$ is \emph{computable} if there is a Turing functional $\Phi$ 
so that $\Phi(\rho)$ is a name of $F(f)$ whenever $\rho$ is a name of $f$.  
An index of such a functional $\Phi$ is also said to be an index of $F$.

Each of the names we have just discussed can be represented as a point in $\Sigma^\omega$ for a sufficiently large alphabet $\Sigma$.

Suppose $\{a_n\}_{n \in \N}$ is a convergent sequence of reals, and let $a = \lim_n a_n$.  A \emph{modulus of convergence} of $\{a_n\}_{n \in \N}$ is a function $g : \N \rightarrow \N$ so that $|a_n - a| < 2^{-k}$ whenever $k \geq g(n)$.  

A measure $\mu\in\M(\R)$ is \emph{computable} if $\mu(\R)$ is a computable real and $\mu(U)$ is left-c.e. uniformly in an index of $U\in\Sigma^0_1(\R)$;  i.e. it is possible to compute an index of the left Dedekind cut of $\mu(U)$ from an index of $U$.  A sequence of measures $\{\mu_n\}_{n\in\N}$ in $\M(\R)$ is \emph{uniformly computable} if $\mu_n$ is a computable measure uniformly in $n$.

Suppose $\mu \in \M(\R)$ is computable.  A pair $(U,V)$ of $\Sigma^0_1$ subsets of $\R$ is \emph{$\mu$-almost decidable} if $U\cap V=\emptyset$, $\mu(U\cup V)=\mu(\R)$, and $\ol{U\cup V}=\R$.  
If, in addition, $U \subseteq A \subseteq \R - V$, then we say that $(U,V)$ is a $\mu$-almost decidable pair of $A$.  
We then say $A$ is \emph{$\mu$-almost decidable} if it has a $\mu$-almost decidable pair. 
Suppose $(U,V)$ is a $\mu$-almost decidable pair of $A$. Then, $e$ indexes $A$ if $e=\left<i,j\right>$ for some index $i$ of $U$ and some index $j$ of $V$.  We note that $\mu$-almost decidable sets are effective analogues of $\mu$-continuity sets.

\section{Preliminaries}

The following effective notions of weak convergence of measures appear in \cite{MR21}.

\begin{definition}\label{def:ew}
    We say $\{\mu_n\}_{n\in\N}$ \emph{effectively weakly converges} to $\mu$ if for every $f \in C_b^c(\R)$, 
    $\lim_n \int_\R f\ d\mu_n = \int_\R f\ d\mu$ and it is possible to compute an index of a modulus of convergence of 
    $\{\int_\R f\ d\mu_n\}_{n \in \N}$ from an index of $f$ and a bound $B \in \N$ on $|f|$. 
\end{definition}
    
\begin{definition}\label{def:uew}
    We say $\{\mu_n\}_{n\in\N}$ \emph{uniformly effectively weakly converges} to $\mu$ if it weakly converges to $\mu$ 
    and there is a uniform procedure that for any $f \in C_b(\R)$ computes a modulus of convergence of 
    $\{\int_\R f\ d\mu_n\}_{n \in \N}$ from a c.o.-name of $f$ and a bound $B \in \N$ on $|f|$.
\end{definition}

Observe that Definition \ref{def:uew} is uniform, whereas Definition \ref{def:ew} is not uniform. Nevertheless, we have the following.

\begin{theorem}[\cite{MR21}]\label{thm:ewc.equiv}
    Suppose $\{\mu_n\}_{n\in\N}$ is uniformly computable. The following are equivalent.
    \begin{enumerate}
        \item $\{\mu_n\}_{n\in\N}$ is effectively weakly convergent.
        \item $\{\mu_n\}_{n\in\N}$ is uniformly effectively weakly convergent.
    \end{enumerate}
\end{theorem}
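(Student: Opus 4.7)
The implication $(2) \Rightarrow (1)$ is immediate by unwinding definitions: from an index of a computable $f \in C_b^c(\R)$ one effectively extracts a compact-open name of $f$, and feeding this name together with the bound $B$ into the uniform procedure supplied by $(2)$ produces the modulus required by $(1)$.

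The substance lies in $(1) \Rightarrow (2)$. The plan is, given an arbitrary $f \in C_b(\R)$ via a c.o.-name, a bound $B\in\N$, and a target precision $k$, to approximate $f$ in the sup norm on a sufficiently large compact interval by a rational polygonal function $p$ and then invoke $(1)$ on $p$, which is a computable bounded function with an effectively obtainable index. This requires two effective ingredients: uniform tightness of $\{\mu_n\}_{n\in\N}$, and an effective procedure to locate a suitable $p$ from only the c.o.-name of $f$. For the first ingredient, observe that $\one \in C_b^c(\R)$, so $(1)$ yields a computable modulus of convergence for $\{\mu_n(\R)\}_{n \in \N} \to \mu(\R)$; applying $(1)$ to a family of rational tent functions $f_R$ that equal $1$ on $[-R, R]$ and vanish outside $[-R-1, R+1]$ delivers an effective asymptotic bound on $\mu_n(\R \setminus [-R-1, R+1])$, which, combined with the uniform computability of $\{\mu_n\}$ (needed to treat the finitely many indices before the relevant threshold), produces a rational $R > 0$ such that $\mu_n(\R \setminus [-R, R]) < 2^{-k}/(8(B+1))$ for every $n \in \N$ and also for $\mu$. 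For the second ingredient, one dovetails through rational polygonal functions supported on rational compact intervals containing $[-R, R]$ and with sup norm at most $B+1$, checking the property $\sup_{x \in [-R, R]}|p(x) - f(x)| < 2^{-k}/(4(\mu(\R)+1))$, which is c.e.\ in the c.o.-name of $f$ by means of a finite rational cover of $[-R, R]$. The search terminates by the classical density of rational polygonals in $C([-R, R])$ for the sup norm. Finally, $(1)$ supplies a modulus $N$ for $\{\int_\R p\, d\mu_n\}_{n\in\N}$ at precision $k+2$, and a triangle-inequality split of $\int_\R f\, d\mu_n$ into contributions from $[-R, R]$ (where $f$ is replaced by $p$) and its complement (bounded by $(B+1)\mu_n(\R \setminus [-R, R])$) converts $N$ into the desired modulus for $\{\int_\R f\, d\mu_n\}_{n\in\N}$ at precision $k$.

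The main obstacle is the effective tightness. Classically, weak convergence of finite measures to a finite limit implies tightness, but turning this into a single computable function of $k$ and $B$ forces a case split: for $n$ past a threshold determined by the modulus of $\{\int f_R\, d\mu_n\}$, the tent-function bound controls the tail uniformly, whereas for the finitely many earlier $n$ one must enlarge $R$ by direct computation using the uniform computability of $\{\mu_n\}$ on rational complements of compact intervals. Merging these two regimes, and verifying that the bookkeeping in the final triangle inequality remains uniform in the c.o.-name of $f$, is the crux.
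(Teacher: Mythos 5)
Your proposal is correct and follows essentially the same route as the source: the paper imports this theorem from \cite{MR21} without reproving it, but the analogous arguments it does give (Theorems \ref{thm:evc.equiv} and \ref{thm:evc2ewc}, via Lemmas \ref{lem:sc1} and \ref{lem:sc2}) use exactly your scheme of effective tightness plus rational polygonal approximation plus a three-term triangle inequality, with the modulus for $\{\mu_n(\R)\}_{n\in\N}$ obtained for free from the constant function $\one\in C_b^c(\R)$. The only quibbles are bookkeeping: on the tail region $|f-\psi|$ is bounded by $2B+1$ rather than $B+1$, and the factor $\mu(\R)+1$ in your sup-norm tolerance should be replaced by a computable upper bound on $\sup_n\mu_n(\R)$, both of which are harmless adjustments of constants.
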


As we will see in Section 5, we will model the effective definitions of vague convergence in $\M(\R)$ after Definitions \ref{def:ew} and \ref{def:uew}.

In addition, we need the following pair of definitions.

\begin{definition}\label{def:wit.lwr.upr}
    Suppose $\{a_n\}_{n \in \N}$ is a sequence of reals, and let $g : \subseteq \Q \rightarrow \N$.
    \begin{enumerate}
        \item We say $g$ \emph{witnesses that $\liminf_n a_n$ is not smaller than $a$} if $\dom(g)$ is the left Dedekind cut of $a$ and if $r < a_n$ whenever $r \in \dom(g)$ and $n \geq g(r)$.
        
        \item We say $g$ \emph{witnesses that $\limsup_n a_n$ is not larger than $a$} if $\dom(g)$ is the right Dedekind cut of $a$ and if $r > a_n$ whenever $r \in \dom(g)$ and $n \geq g(r)$.
    \end{enumerate}
\end{definition}

In order to prove our first main result in the next section, we will need the following theorem.

\begin{theorem}[Effective Portmanteau Theorem, \cite{MR21}]\label{thm:ept}
    Let $\{\mu_n\}_{n\in\N}$ be a uniformly computable sequence in $\M(\R)$. The following are equivalent.
    \begin{enumerate}
        \item $\{\mu_n\}_{n\in\N}$ effectively weakly converges to $\mu$.\label{thm:ept::ewc}
            
        \item From $e,B \in \N$ so that $e$ indexes a uniformly continuous $f \in C_b(\R)$ with $|f| \leq B$, it is possible to compute a modulus of convergence of $\{\int_\R f\ d\mu_n\}_{n \in \N}$.\label{thm:ept::uc}
        
        \item $\mu$ is computable, and from an index of $C \in \Pi^0_1(\R)$ it is possible to compute an index of a witness that $\limsup_n \mu_n(C)$ is not larger than $\mu(C)$.\label{thm:ept::clsd}
            
        \item $\mu$ is computable, and from an index of $U \in \Sigma^0_1(\R)$ it is possible to compute an index of a witness that $\liminf_n \mu_n(U)$ is not smaller than $\mu(U)$.\label{thm:ept::opn}
            
        \item $\mu$ is computable, and for every $\mu$-almost decidable $A$, $\lim_n \mu_n(A) = \mu(A)$ and an index of a modulus of convergence of $\{\mu_n(A)\}_{n \in \N}$ can be computed from a $\mu$-almost decidable index of $A$.\label{thm:ept::a.d.}
    \end{enumerate}
\end{theorem}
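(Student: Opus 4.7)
The plan is to establish the five-way equivalence by cycling through $(1) \Rightarrow (2) \Rightarrow (4) \Rightarrow (3) \Rightarrow (5) \Rightarrow (1)$, mirroring the classical Portmanteau proof but upgrading each step to preserve uniformity of the relevant indices. Preliminarily, each of (2)--(5) forces $\mu$ to be computable: applying hypothesis (2) to $f \equiv 1$ shows $\mu(\R)$ is computable, and applying the remaining hypotheses to characteristic functions of rational intervals whose endpoints are chosen from the (dense, computably enumerable) set of non-atoms of $\mu$ yields the distributional data needed to make $\mu(U)$ left-c.e.\ uniformly in an index of $U \in \Sigma^0_1(\R)$.

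The implication $(1) \Rightarrow (2)$ is straightforward, since a uniformly continuous bounded function together with a modulus of uniform continuity and a pointwise bound gives a computable function. The key step is $(2) \Rightarrow (4)$: given $U \in \Sigma^0_1(\R)$, write $U = \bigcup_n I_n$ as an effective union of rational open intervals and form the truncations $U_n = I_1 \cup \cdots \cup I_n$. Each $\R \setminus U_n$ is a finite union of rational closed intervals, hence computably closed, so the functions $g_{k,n}(x) = \min\{1, k \cdot d(x, \R \setminus U_n)\}$ are uniformly continuous, bounded by $1$, and computable with a c.o.-name obtained uniformly from $(k, n)$. By (2), $\{\int g_{k,n}\, d\mu_m\}_m$ has a uniform modulus of convergence, so its limit $\int g_{k,n}\, d\mu$ is computable uniformly in $(k, n)$. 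Since $g_{k,n} \uparrow \chi_U$ pointwise and $\int g_{k,n}\, d\mu_m \leq \mu_m(U)$ for all $m$, given a rational $r$ in the left Dedekind cut of $\mu(U)$ one effectively locates $(k, n)$ with $\int g_{k,n}\, d\mu > r$ and then uses the modulus to produce $N$ with $\mu_m(U) \geq \int g_{k,n}\, d\mu_m > r$ for all $m \geq N$, yielding the required witness in the sense of Definition \ref{def:wit.lwr.upr}. The implication $(3) \iff (4)$ then follows by complementation, using the computability of $\mu(\R)$.

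For $(3),(4) \Rightarrow (5)$: given a $\mu$-almost decidable pair $(U, V)$ of $A$, one has $U \subseteq A \subseteq \R \setminus V$ with $\mu(U) = \mu(\R \setminus V) = \mu(A)$, so the $\liminf$ witness from (4) applied to $U$ and the $\limsup$ witness from (3) applied to $\R \setminus V$ sandwich $\mu_m(A)$ and combine into a modulus of convergence for $\{\mu_m(A)\}_m$. For $(5) \Rightarrow (1)$: approximate $f \in C_b^c(\R)$ with $|f| \leq B$ by simple functions $\sum_i t_i \chi_{A_i}$ on level sets $A_i = f^{-1}([t_i, t_{i+1}))$; since $f$ is computable with compact support and the at-most countable set of atoms of the pushforward measure $f_*\mu$ can be effectively avoided when choosing the $t_i$, one produces a rational partition of $[-B, B]$ with $\mu$-almost decidable preimages, applies (5) to each piece, and sums the resulting moduli to obtain the modulus required by Definition \ref{def:ew}. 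The principal obstacle throughout is uniformity rather than classical correctness, and the most delicate point is $(5) \Rightarrow (1)$: producing $\mu$-almost decidable partitions of the range of $f$ on demand relies on the fact that a computable finite measure on $\R$ has an effectively enumerable dense set of non-atoms, so that a uniform search through rational thresholds always succeeds.
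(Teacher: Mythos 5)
You should first be aware that this paper does not prove Theorem \ref{thm:ept} at all: it is imported verbatim from \cite{MR21} (Theorem 5.1 there), so there is no in-paper proof to compare against. Your overall skeleton (cycle through the conditions, Lipschitz approximations $g_{k,n}$ to indicators of open sets, sandwiching an almost decidable set between its $\Sigma^0_1$ pair) is the standard one, but two steps have genuine gaps. The first is ``$(3)\iff(4)$ by complementation, using the computability of $\mu(\R)$.'' Complementation gives $\mu_n(C)=\mu_n(\R)-\mu_n(\R\setminus C)$, so converting a liminf witness for the open set $\R\setminus C$ into a limsup witness for $C$ requires effective \emph{two-sided} convergence of the total masses $\mu_n(\R)\to\mu(\R)$; computability of the single real $\mu(\R)$ does nothing here. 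Condition (4) applied to $U=\R$ yields only the liminf direction and (3) applied to $C=\R$ only the limsup direction, so neither supplies the needed modulus on its own. This is not cosmetic: with $\mu_n=2\delta_0$ and $\mu=\delta_0$ one can compute a witness as in (4) for every $U\in\Sigma^0_1(\R)$ (the witness halts on $r$ iff $r<0$, or $r<1$ and $0\in U$, the latter being c.e.), yet $\int_\R 1\,d\mu_n=2\not\to1$, so (1) fails. The statement in \cite{MR21} carries an effective total-mass convergence clause inside (3) and (4) precisely for this reason; your argument must establish that clause in $(2)\Rightarrow(4)$ (e.g.\ by applying (2) to $f\equiv1$) and then invoke it in the complementation, rather than leaving it implicit.

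The second gap is in $(5)\Rightarrow(1)$. Beyond the slip that $f\in C_b^c(\R)$ need not be compactly supported (so a tightness estimate is also required), the level sets $A_i=f^{-1}([t_i,t_{i+1}))$ are not automatically $\mu$-almost decidable: the natural pair $U_i=f^{-1}((t_i,t_{i+1}))$, $V_i=f^{-1}(\R\setminus[t_i,t_{i+1}])$ satisfies $\mu(U_i\cup V_i)=\mu(\R)$ once the $t_i$ avoid atoms of $f_*\mu$, but the definition also demands $\ol{U_i\cup V_i}=\R$, which fails whenever $f$ is constant equal to $t_i$ on an interval, and such bad values are not excluded merely by avoiding atoms. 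Relatedly, the set of non-atoms of $\mu$ is not ``computably enumerable''; what is actually available is the weaker Hoyrup--Rojas fact underlying Lemma \ref{lem:eoc}, namely that one can compute $\mu$-almost decidable balls of prescribed small radius. The robust route is to partition the \emph{domain} into small almost decidable intervals and exploit uniform continuity of $f$ on a large compact set (i.e.\ prove $(5)\Rightarrow(2)$ and close the cycle through (2)), which sidesteps the level-set problem entirely.
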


\section{Effective Convergence in the Prokhorov Metric}

We say that a sequence $\{\mu_n\}_{n\in\N}$ in $\M(\R)$ \emph{converges effectively in the Prokhorov metric} $\rho$ to a measure $\mu$ if there is a computable function $\epsilon:\N\rightarrow\N$ such that for every $n,N\in\N$, $n\geq\epsilon(N)$ implies $\rho(\mu_n,\mu)<2^{-N}$. 
Since $\M(\R)$ forms a computable metric space under $\rho$, it follows that every uniformly computable sequence of measures in $\M(\R)$ converges to a computable measure in $\rho$. 
As $\rho$ metrizes the topology of weak convergence of measures in $\M(\R)$, it is natural to characterize effective convergence in $\rho$ as an effective notion of weak convergence. 
However, it is not immediately clear that effective convergence in $\rho$ can be obtained from effective weak convergence and vice versa.
The following result states that both of these convergence notions coincide for uniformly computable sequences on measures in $\M(\R)$.

\begin{theorem}\label{thm:cpm}
    Suppose $\{\mu_n\}_{n\in\N}$ is uniformly computable. The following are equivalent:
    \begin{enumerate}
        \item $\{\mu_n\}_{n\in\N}$ is effectively weakly convergent;
        \item $\{\mu_n\}_{n\in\N}$ converges effectively in $\rho$.
    \end{enumerate}
\end{theorem}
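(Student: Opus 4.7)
The plan is to prove both implications by reducing to characterizations of effective weak convergence provided by the Effective Portmanteau Theorem (Theorem \ref{thm:ept}).

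For (2)$\Rightarrow$(1), assume $\{\mu_n\}_{n\in\N}$ converges to $\mu$ effectively in $\rho$. Since $(\M(\R),\rho)$ is a computable metric space in which $\{\mu_n\}_{n\in\N}$ is a computable Cauchy sequence with a computable modulus, the limit $\mu$ is computable. To establish effective weak convergence I would verify item \ref{thm:ept::clsd} of Theorem \ref{thm:ept}: from an index of $C\in\Pi^0_1(\R)$ and a rational $r>\mu(C)$, compute $n_0$ with $\mu_n(C)<r$ for all $n\geq n_0$. The defining property of $\rho$ gives $\mu_n(C)\leq\mu(B(C,\epsilon))+\epsilon$ whenever $\rho(\mu_n,\mu)<\epsilon$, so it would suffice to locate a rational $\epsilon>0$ with $\mu(B(C,\epsilon))+\epsilon<r$. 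The obstacle is that $B(C,\epsilon)$ is open, so $\mu(B(C,\epsilon))$ is only lower semi-computable and its smallness cannot be verified directly. I would circumvent this by replacing $B(C,\epsilon)$ with the closed fattening $F_\epsilon=\{x:d(x,C)\leq\epsilon\}$, which is $\Pi^0_1$ with index computable from that of $C$ and rational $\epsilon$: indeed, for a rational open interval $I_j$, one has $I_j\subseteq\R\setminus F_\epsilon$ iff $B(I_j,\epsilon)$ (also a rational open interval) is contained in the $\Sigma^0_1$ set $\R\setminus C$, a c.e. condition. Consequently $\mu(F_\epsilon)$ is upper semi-computable in $\epsilon$ and $\mu(F_\epsilon)\downarrow\mu(C)<r$ as $\epsilon\to 0$, so one effectively searches for a rational $\epsilon$ with $\mu(F_\epsilon)+\epsilon<r$; then $\mu(B(C,\epsilon))+\epsilon\leq\mu(F_\epsilon)+\epsilon<r$, and the corresponding $n_0$ comes from the effective $\rho$-modulus.

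For (1)$\Rightarrow$(2), assume effective weak convergence and, given $N\in\N$, write $\epsilon=2^{-N}$; the goal is to produce $n_0$ with $\rho(\mu_n,\mu)<\epsilon$ for $n\geq n_0$. Using lower semi-computability of $\mu((-M,M))$ in the rational parameter $M$, find $M$ with $\mu(\R\setminus(-M,M))<\epsilon/12$, and perturb $M$ computably to avoid atoms of $\mu$ so that $(-M,M)$ becomes $\mu$-almost decidable. Next, invoke the effective density of $\mu$-almost decidable intervals established in \cite{MR21,HR09b} to partition $(-M,M)$ into $\mu$-almost decidable sets $A_1,\ldots,A_k$ of diameter less than $\epsilon/4$. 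By item \ref{thm:ept::a.d.} of Theorem \ref{thm:ept}, compute $n_0$ such that for every $n\geq n_0$ one has $|\mu_n(A_i)-\mu(A_i)|<\epsilon/(12k)$ for all $i$ and $|\mu_n(\R)-\mu(\R)|<\epsilon/12$. For any closed $F$, the diameter bound forces $A_i\cap F\neq\emptyset\Rightarrow A_i\subseteq B(F,\epsilon/4)$, and combining the above approximations with $\mu_n(\R\setminus\bigcup_i A_i)\leq\mu(\R\setminus\bigcup_i A_i)+\epsilon/6<\epsilon/4$ yields
\[
    \mu_n(F)\leq\sum_{i\,:\,A_i\cap F\neq\emptyset}\mu_n(A_i)+\mu_n\!\left(\R\setminus\bigcup_i A_i\right)\leq\mu(B(F,\epsilon))+\epsilon.
\]
The symmetric bound follows by swapping $\mu$ and $\mu_n$, so $\rho(\mu_n,\mu)<\epsilon$.

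The main obstacles are twofold. In (2)$\Rightarrow$(1), the asymmetry between open and closed sets in the definition of $\rho$ prevents a direct verification that $\mu(B(C,\epsilon))$ is small, forcing the passage to the closed fattening $F_\epsilon$. In (1)$\Rightarrow$(2), the most delicate step is producing the $\mu$-almost decidable partition of $(-M,M)$ computably at a prescribed scale while avoiding the at-most-countable atoms of $\mu$; this rests on the effective density of $\mu$-almost decidable intervals from \cite{MR21,HR09b}.
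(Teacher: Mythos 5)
Your proposal is correct and follows essentially the same route as the paper: both directions go through the Effective Portmanteau Theorem, with (2)$\Rightarrow$(1) handled by showing the closed $\epsilon$-fattening of $C$ is $\Pi^0_1$ (hence of right-c.e.\ $\mu$-measure) and searching for a suitable $\epsilon$, and (1)$\Rightarrow$(2) handled by a finite family of small $\mu$-almost decidable sets on which $\mu_n$ is computably close to $\mu$. The differences (partitioning a truncated interval $(-M,M)$ versus covering $\R$ and truncating by measure, and verifying the Prokhorov inequality only for closed $F$, which suffices since $B(\ol{E},\epsilon)=B(E,\epsilon)$) are cosmetic.
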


First, we need the following lemma.

\begin{lemma}\label{lem:eoc}
    Let $\mu\in\M(\R)$ be computable, and let $s>0$ be rational. It is possible to compute an open cover of $\R$ consisting of open balls with radius less that $s$, each of which is a $\mu$-almost decidable set.
\end{lemma}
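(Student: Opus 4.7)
The plan is to produce the cover as a uniformly enumerated sequence of open balls $B(a_k, r_k)$ with rational centers $a_k = k s / 3$ (for $k \in \Z$) and computable radii $r_k$ chosen in the rational interval $(s/2, 2s/3)$, together with an explicit $\mu$-almost decidable pair witnessing each ball. Since consecutive centers are spaced $s/3$ apart and each radius exceeds $s/6$, the balls cover $\R$, and each radius is strictly below $s$. The whole task reduces to effectively producing, uniformly in $k$, a computable $r_k \in (s/2, 2s/3)$ such that $\mu(\{a_k - r_k\}) = \mu(\{a_k + r_k\}) = 0$. With such $r_k$ in hand, the natural pair
\[
    (U_k, V_k) := \bigl((a_k - r_k, a_k + r_k),\ (-\infty, a_k - r_k) \cup (a_k + r_k, \infty)\bigr)
\]
satisfies $U_k \cap V_k = \emptyset$, $\ol{U_k \cup V_k} = \R$, and $\mu(U_k \cup V_k) = \mu(\R)$, while the computability of $a_k \pm r_k$ yields uniform $\Sigma^0_1$ indices for $U_k$ and $V_k$.

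To select $r_k$, I construct a nested sequence of closed rational subintervals $[u_n, v_n] \subset (s/2, 2s/3)$ with $[u_{n+1}, v_{n+1}] \subset (u_n, v_n)$, length $v_n - u_n < 2^{-n}$, and
\[
    \mu([a_k - v_n, a_k - u_n]) + \mu([a_k + u_n, a_k + v_n]) < 2^{-n},
\]
then set $r_k := \lim_n u_n = \lim_n v_n$. Since $\mu$ of a rational closed interval is right-c.e. uniformly in its endpoints, the displayed condition is c.e. in the pair $(u_n, v_n)$, so at stage $n$ I search through rational pairs inside $(u_{n-1}, v_{n-1})$ until it is verified. The resulting $r_k$ is computable as the limit of a uniformly computable rational sequence, and because $r_k \in (u_n, v_n)$ for every $n$, the bound forces $\mu(\{a_k - r_k\}) = 0 = \mu(\{a_k + r_k\})$.

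The main obstacle is verifying that a valid $[u_{n+1}, v_{n+1}]$ exists at every stage; this is where the finiteness of $\mu$ is essential. Because $\mu$ has at most countably many atoms, there is some $r^* \in (u_n, v_n)$ for which neither $a_k - r^*$ nor $a_k + r^*$ is an atom. By continuity of $\mu$ from above on closed sets,
\[
    \mu([a_k - r^* - \delta, a_k - r^* + \delta]) + \mu([a_k + r^* - \delta, a_k + r^* + \delta]) \longrightarrow 0
\]
as $\delta \to 0^+$, so for sufficiently small rational $\delta$ any closed interval of length $2\delta$ containing $r^*$ and lying inside $(u_n, v_n)$ satisfies the required bound, and some such pair will eventually be witnessed by the c.e. search.

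Assembling these pieces uniformly in $k$ yields the desired computable enumeration of an open cover of $\R$, where each ball has radius less than $s$ and is presented together with indices of its $\mu$-almost decidable pair.
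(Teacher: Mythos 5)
Your proof is correct and follows essentially the same approach as the paper, which simply adapts Lemma 5.1.1 of Hoyrup--Rojas: the heart of the matter in both cases is an effective search for radii whose boundary spheres are $\mu$-null, exploiting that $\mu$ of a rational closed interval is right-c.e.\ (uniformly) and that only countably many radii can fail. The only difference is that you write the argument out in full for $\R$ with an explicit grid of centers, whereas the paper defers to the cited lemma with the radii restricted to $(0,s)$.
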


\begin{proof}
    Adapt the proof of Lemma 5.1.1 in \cite{HR09b} by replacing $\R^+$ with $(0,s)$.
\end{proof}

The proof of the classical version of Theorem \ref{thm:cpm} makes use of the classical Portmanteau Theorem as well as a classical version of Lemma \ref{lem:eoc}. 
As we shall see below, Theorem \ref{thm:cpm} makes use of the effective Portmanteau Theorem as well as Lemma \ref{lem:eoc}. 
However, before proving Theorem \ref{thm:cpm}, we also require the following lemma. 

\begin{lemma}\label{lem:cn}
    If $C\in\Pi^0_1(\R)$, then $\ol{B(C,s)}\in\Pi^0_1(\R)$ for any rational $s>0$.
\end{lemma}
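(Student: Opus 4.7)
My plan is to reduce the condition ``$I_i \cap \overline{B(C, s)} = \emptyset$'' to a c.e.\ query about $C$ itself, thereby exhibiting $\{i : I_i \cap \overline{B(C,s)} = \emptyset\}$ as c.e. Writing $I_i = (a, b)$ with $a, b \in \Q$, the core step will be to establish the equivalence
\[
    I_i \cap \overline{B(C, s)} = \emptyset \iff (a - s, b + s) \cap C = \emptyset.
\]
Granted this, the proof finishes quickly: from $i$ and the rational $s$ one computes $a - s$ and $b + s$, then an index $j$ of the rational open interval $(a - s, b + s)$, and enumerates $i$ as soon as $j$ enters the c.e.\ set $\{k : I_k \cap C = \emptyset\}$ guaranteed by $C \in \Pi^0_1(\R)$.

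For the forward direction of the equivalence, I will argue by contradiction: assuming $d(x, C) > s$ for every $x \in (a, b)$, any hypothetical $c \in C \cap (a-s, b+s)$ falls into one of three subcases---$c \in [a, b]$, $c \in (a-s, a)$, or $c \in (b, b+s)$---and in each case a small push from $c$ into $(a, b)$ produces an $x \in (a, b)$ with $|x - c| < s$, contradicting the hypothesis. For the reverse direction, if $(a-s, b+s) \cap C = \emptyset$ then every $c \in C$ satisfies $c \leq a - s$ or $c \geq b + s$, so for any $x \in (a, b)$ one has $|x - c| > s$ strictly; because $C$ is closed, this pointwise strict inequality lifts to $d(x, C) > s$ (otherwise a minimizing sequence in $C$ would accumulate to a point in $C$ at distance exactly $s$ from $x$), placing $x$ outside $\overline{B(C, s)}$.

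The main obstacle I anticipate is the bookkeeping of strict versus non-strict inequalities along the boundary, since $\overline{B(C, s)} = \{x : d(x, C) \leq s\}$ uses a non-strict inequality while both $I_i$ and the auxiliary interval $(a - s, b + s)$ are open. The delicate scenarios are when $C$ approaches but does not touch an endpoint $a - s$ or $b + s$, or when $d(a, C) = s$ exactly at an endpoint of $I_i$; the equivalence hinges on the openness of both intervals lining up with the closedness of $C$. Once this subtlety is handled, the effective content of the lemma reduces to the one-line application of the $\Pi^0_1$-ness of $C$ noted above.
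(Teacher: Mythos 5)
Your proof is correct, and it takes a genuinely different route from the paper's. The paper also enumerates $\{i : I_i \cap \ol{B(C,s)} = \emptyset\}$, but it writes $I_i = B(a_i, r_i)$, reduces the disjointness condition to $d(a_i, C) > r_i + s$, and then invokes the lower semi-computability of $x \mapsto d(x,C)$ for $\Pi^0_1$ sets (citing Weihrauch) to make the search effective. You instead reduce the condition to the purely combinatorial statement $(a-s, b+s) \cap C = \emptyset$ and appeal directly to the definition of $C \in \Pi^0_1(\R)$, i.e.\ to the c.e.-ness of $\{k : I_k \cap C = \emptyset\}$; this is more elementary, requiring no cited semi-computability theorem, and your careful handling of the strict-versus-non-strict boundary cases actually makes the reduction exact: your biconditional captures \emph{every} index $i$ with $I_i \cap \ol{B(C,s)} = \emptyset$, including those where $d$ attains the value $s$ on the boundary (a case the paper's strict inequality $d(a_i,C) > r_i + s$ quietly misses, e.g.\ $C = \{2\}$, $s=1$, $I_i = (3,4)$). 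Two small remarks: your forward direction implicitly assumes $I_i$ is nonempty ($a < b$), which is the standard convention for the enumeration of rational open intervals but worth stating, since for a degenerate interval the equivalence fails; and your compactness argument in the reverse direction can be avoided entirely, since for $x \in (a,b)$ every $c \le a - s$ satisfies $|x - c| \ge x - (a-s) > s$ and every $c \ge b+s$ satisfies $|x - c| \ge (b+s) - x > s$, giving the uniform bound $d(x,C) \ge \min\{x-(a-s),\, (b+s)-x\} > s$ directly.
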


\begin{proof}
    Let $\{I_i\}_{i\in\N}$ be an enumeration of all rational open intervals of $\R$. 
    Then, for each $i\in\N$, $I_i=B(a_i,r_i)$ for some $a_i,r_i\in\Q$ with $r_i>0$. 
    Let $A=\ol{B(C,s)}$, and let $E_A=\{i\in\N:I_i\cap A=\emptyset\}$. 
    It suffices to show that $E_A$ is c.e.. 
    Now, for each $i\in\N$, we enumerate $i$ into $E_A$ whenever $B(a_i,r_i)\cap A=\emptyset$. 
    Note that $B(a_i,r_i)\cap A=\emptyset$ if and only if $d(a_i,A)>r_i$, which occurs if and only if $d(a_i,C)>r_i+s$. 
    Since $C\in\Pi^0_1(\R)$, $x\mapsto d(x,C)$ is lower-semicomputable (Theorem 5.1.2 in \cite{Weihrauch.2000}). 
    Thus, the enumeration is effective. 
    Since $s>0$ was arbitrary, the result follows.
\end{proof}

\begin{proof}[Proof of Theorem \ref{thm:cpm}]
    Suppose that $\{\mu_n\}_{n\in\N}$ converges effectively in $\rho$ to $\mu$. Then, $\mu$ is computable, and we have a computable function $\epsilon:\N\rightarrow\N$ such that for all $N\in\N$ and all $n\geq\epsilon(N)$, $\rho(\mu_n,\mu)<2^{-N}$. In particular, for any $C\in\Pi^0_1(\R)$ and all $n\geq\epsilon(N)$,
    \[
        \mu_n(C)\leq\mu(B(C,2^{-N}))+2^{-N}\text{ and }\mu(C)\leq\mu_n(B(C,2^{-N}))+2^{-N}.
    \]
    By Theorem \ref{thm:ept}, it suffices to compute an index of a witness that $\limsup_n\mu_n(C)$ is not larger than $\mu(C)$ from an index $e\in\N$ of $C\in\Pi^0_1(\R)$.

    Fix $r\in\Q$. 
    Wait until $r$ is enumerated into the right Dedekind cut of $\mu(C)$. 
    Since $\mu$ is computable and by Lemma \ref{lem:cn}, $\mu(\ol{B(C,2^{-N})})$ is right-c.e. for any $N\in\N$. 
    Search for the first $M_0\in\N$ so that $r-\mu(C)>2^{-M_0}$. 
    Then, search for the first $N_0$ such that $r-\mu(\ol{B(C,2^{-N_0})})>2^{-M_0}$. 
    Let $M=M_0+N_0+1$, and let $n_0=\epsilon(M)$. 
    Therefore, for all $n\geq n_0$,
    \[
        \mu_n(C)\leq\mu(B(C,2^{-M}))+2^{-M}\leq\mu(\ol{B(C,2^{-M})})+2^{-M}<r-2^{-M}+2^{-M}=r.
    \]
    It follows that $n_0$ is an index of a witness that $\limsup_n\mu_n(C)$ is not larger than $\mu(C)$.

    Next, suppose that $\{\mu_n\}_{n\in\N}$ effectively weakly converges to $\mu$. Then, $\mu$ is computable. By Theorem \ref{thm:ept}, we can compute for every $\mu$-almost decidable $A$ an index of a modulus of convergence of $\{\mu_n(A)\}_{n\in\N}$ from a $\mu$-almost decidable index of $A$.

    We build the function $\epsilon:\N\rightarrow\N$ by the following effective procedure. 
    First, let $N\in\N$. 
    By Lemma \ref{lem:eoc}, we can compute a sequence $\{B_j\}_{j\in\N}$ of uniformly $\mu$-almost decidable rational open balls in $\R$ with radius less than $2^{-(N+3)}$ such that $\bigcup_{j=1}^{\infty}B_j=\R$. 
    Search for the first $k_0$ so that $\mu(\bigcup_{j=1}^{k_0}B_j)\geq\mu(\R)-2^{-(N+2)}$. 
    Let
    \[
        \A=\left\{\bigcup_{j\in J}B_j:J\subseteq\{1,\ldots,k_0\}\right\}.
    \]
    Then, $\A$ is a finite collection of $\mu$-almost decidable sets. 
    Define $\epsilon(N)$ to be the smallest index so that $|\mu_n(A)-\mu(A)|<2^{-(N+2)}$ for every $A\in\A$ and every $n\geq\epsilon(N)$. 
    Thus,
    \[
        \mu_n\left(\R\setminus\left(\bigcup_{j=1}^{k_0}B_j\right)\right)\leq\mu\left(\R\setminus\left(\bigcup_{j=1}^{k_0}B_j\right)\right)+2^{-(N+2)}\leq2^{-(N+1)}
    \]
    for all $n\geq\epsilon(N)$.

    To see that $\epsilon$ is the desired function, fix $E\in\B(\R)$. 
    Let
    \[
        A_0=\bigcup\{B_j:(j\in\{1,\ldots,k_0\})\wedge(B_j\cap E\neq\emptyset)\}.
    \]
    Then, $A_0\in\A$ and satisfies the following properties:
    \begin{enumerate}
        \item $A_0\subset B(E,2^{-(N+2)})\subset B(E,2^{-N})$.
        \item $E\subset A_0\cup\left(\R\setminus\left(\bigcup_{j=1}^{k_0}B_j\right)\right)$.
        \item $|\mu_n(A_0)-\mu(A_0)|<2^{-(N+2)}$ for all $n\geq\epsilon(N)$.
    \end{enumerate}
    Therefore, for all $n\geq\epsilon(N)$,
    \begin{align*}
        \mu(E)&\leq\mu(A_0)+2^{-(N+2)}\\
        &<\mu_n(A_0)+2^{-(N+1)}\\
        &\leq\mu_n(B(E,2^{-(N+2)}))+2^{-(N+1)}<\mu_n(B(E,2^{-N}))+2^{-N}
    \end{align*}
    and
    \[
        \mu_n(E)\leq\mu_n(A_0)+2^{-(N+1)}<\mu(A_0)+2^{-N}\leq\mu(B(E,2^{-N}))+2^{-N}.
    \]
    Since $E\in\B(\R)$ was arbitrary, it follows that $\rho(\mu_n,\mu)<2^{-N}$ for all $n\geq\epsilon(N)$, as desired.
\end{proof}

Theorem \ref{thm:cpm} serves as further evidence that effective weak convergence is the appropriate computable analogue to weak convergence. 
When viewing $\M(\R)$ as a computable metric space, effective weak convergence can be defined as the effective topology induced by $\rho$ in $\M(\R)$.

\section{Effective Vague Convergence in $\M(\R)$}

In Sections 3 and 4, we discussed effective weak convergence in $\M(\R)$. 
In this section, we effectivize the definition of vague convergence. 
Just as in the case of weak convergence, we provide a non-uniform definition (Definition \ref{def:ev}) and a uniform definition (Definition \ref{def:uev}).

\begin{definition}\label{def:ev}
    We say $\{\mu_n\}_{n\in\N}$ \emph{effectively vaguely converges} to $\mu$ if for every $f \in C_K^c(\R)$, $\lim_n \int_\R f\ d\mu_n = \int_\R f\ d\mu$ and it is possible to compute an index of a modulus of convergence of $\{\int_\R f\ d\mu_n\}_{n \in \N}$ from an index of $f$ and an index of $\supp{f}$. 
\end{definition}

\begin{definition}\label{def:uev}
    We say $\{\mu_n\}_{n\in\N}$ \emph{uniformly effectively vaguely converges} to $\mu$ if it weakly converges to $\mu$ and there is a uniform procedure that for any $f \in C_K(\R)$ computes a modulus of convergence of $\{\int_\R f\ d\mu_n\}_{n \in \N}$ from a name of $f$ and a name of $\supp{f}$.
\end{definition}

As expected, the following theorem established the equivalence between these two definitions.

\begin{theorem}\label{thm:evc.equiv}
    Suppose $\{\mu_n\}_{n\in\N}$ is uniformly computable. The following are equivalent.
    \begin{enumerate}
        \item $\{\mu_n\}_{n\in\N}$ is effectively vaguely convergent.
        \item $\{\mu_n\}_{n\in\N}$ is uniformly effectively vaguely convergent.
    \end{enumerate}
\end{theorem}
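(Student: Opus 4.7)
The plan is to prove (1) $\Rightarrow$ (2) by effectively approximating any $f \in C_K(\R)$ by rational polygonal functions with controlled support, for which hypothesis (1) directly yields moduli of convergence. The reverse direction (2) $\Rightarrow$ (1) is immediate: from an index of a computable $f \in C_K^c(\R)$ and an index of $\supp f$ one can uniformly extract a c.o.-name of $f$ and a minimal cover name of $\supp f$, then invoke the uniform procedure from (2).

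For (1) $\Rightarrow$ (2), suppose $f \in C_K(\R)$ is given by a c.o.-name together with a name of $K := \supp f$. Since $K$ is computably compact, first compute a rational compact interval $[a,b]$ containing $K$ in its interior. Next, using the c.o.-name and the uniform continuity of $f$ on $[a,b]$, for each $k \in \N$ search the c.o.-name for a finite cover of $[a,b]$ by rational open intervals $I_1, \ldots, I_m$ with corresponding rational open $J_1, \ldots, J_m$ of length less than $2^{-(k+1)}$ and $f[I_j] \subseteq J_j$, insisting in addition that $J_j \subseteq B(0, 2^{-(k+1)})$ whenever $I_j$ meets $\R \setminus K$ (so that the polygonal approximation vanishes near $a$ and $b$). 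From this data, construct a rational polygonal $p_k$ on $[a,b]$ with $p_k(a) = p_k(b) = 0$ and $\|f - p_k\|_\infty \leq 2^{-k}$, extended by $0$ outside $[a,b]$. Both an index of $p_k \in C_K^c(\R)$ and a name of $\supp p_k$ can then be obtained uniformly in $k$.

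To assemble the modulus I need a uniform bound on $\mu_n(K')$ for a slightly larger rational interval $K' \supset [a,b]$. For this, fix a rational trapezoidal $g \in C_K^c(\R)$ with $0 \leq g \leq 1$, $g \equiv 1$ on $[a,b]$, and $\supp g \subseteq K'$. By hypothesis (1) applied to $g$, the sequence $\{\int_\R g\, d\mu_n\}_{n\in\N}$ converges with a computable modulus to the computable real $\int_\R g\, d\mu$, hence is effectively bounded by some computable $M$, giving $\mu_n(K') \leq M$ for all $n$ and $\mu(K') \leq M$. Given $N \in \N$, choose $k$ with $2M \cdot 2^{-k} < 2^{-(N+1)}$, then apply (1) to $p_k$ to obtain an $n_0$ beyond which $\bigl|\int_\R p_k\, d\mu_n - \int_\R p_k\, d\mu\bigr| < 2^{-(N+1)}$. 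Since $f - p_k$ is supported in $K'$, the triangle inequality yields
\begin{align*}
\Bigl|\int_\R f\, d\mu_n - \int_\R f\, d\mu\Bigr|
&\leq \|f - p_k\|_\infty \cdot \bigl(\mu_n(K') + \mu(K')\bigr)
+ \Bigl|\int_\R p_k\, d\mu_n - \int_\R p_k\, d\mu\Bigr| \\
&< 2^{-N}
\end{align*}
for all $n \geq n_0$, uniformly in the name of $f$ and the name of $\supp f$.

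The main obstacle is the lack of an a priori uniform bound on $\mu_n(\R)$ under effective vague convergence --- unlike in the weak case, total mass need not be controlled, so one cannot use a blanket $\|\mu_n\|$ bound to estimate $\|f - p_k\|_\infty \cdot \mu_n(\R)$. I sidestep this by extracting a bound only on a neighborhood of $\supp f$ via hypothesis (1) applied to a single auxiliary plateau function $g$, built uniformly from the rational interval $[a,b]$ produced by the name of $\supp f$. A secondary technical point is ensuring that the rational polygonal approximations $p_k$ effectively have compact support inside a known rational interval, which is handled by exploiting the fact that $f$ vanishes near the boundary of $[a,b]$ to force $p_k(a) = p_k(b) = 0$.
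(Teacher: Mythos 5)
Your proof is correct and takes essentially the same route as the paper's: both directions match, and your (1) $\Rightarrow$ (2) argument (enclose $\supp f$ in a rational interval, use an auxiliary trapezoidal plateau function and hypothesis (1) to effectively bound the mass near the support, then approximate $f$ by a rational polygonal function vanishing at the endpoints with accuracy calibrated to that bound) is exactly the paper's construction with $T$ and $\psi$. One small slip: since $g \leq \one_{K'}$ you get $\int_\R g\,d\mu_n \leq \mu_n(K')$, not the reverse, so your computable $M$ actually bounds $\mu_n([a,b])$ (where $g \equiv 1$) rather than $\mu_n(K')$ --- but as $f - p_k$ is supported in $[a,b]$, that is precisely the bound your triangle inequality needs, so the argument stands.
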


Before we prove this theorem, we need the following lemma.

\begin{lemma}\label{lem:rpf}
    For all $f\in C_K(\R)$, it is possible to compute a rational polygonal function $\psi$ so that $\supp{\psi}\subseteq[\lfloor\min\{\supp{f}\}\rfloor,\lceil\max\{\supp{f}\}\rceil]$ from a name of $f$ and a name of $\supp{f}$.
\end{lemma}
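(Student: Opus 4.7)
The plan is to build $\psi$ as a piecewise linear interpolant of $f$ on a sufficiently fine rational grid of the integer bracket $I = [\lfloor \min \supp{f} \rfloor, \lceil \max \supp{f} \rceil]$, with values forced to vanish at the endpoints of $I$. The construction proceeds in three steps: compute the integer bracket $I$ from the minimal cover name of $\supp{f}$; compute a modulus of uniform continuity for $f$ on $I$ from the compact-open name of $f$; and then interpolate between rational approximations to $f$ at a sufficiently fine rational partition of $I$.

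For step one, since $\supp{f}$ is computably compact, the extremal reals $\min \supp{f}$ and $\max \supp{f}$ are computable, obtained by refining the enumerated minimal finite covers until their leftmost and rightmost points stabilize to within any desired tolerance. Taking the integers $m = \lfloor \min \supp{f} \rfloor$ and $M = \lceil \max \supp{f} \rceil$ yields $\supp{f} \subseteq [m,M]$. For step two, from the compact-open name of $f$ together with the compact interval $[m,M]$, I would compute a modulus $\omega : \N \to \N$ of uniform continuity for the restriction of $f$ to $[m,M]$; this is a standard procedure since the restriction of a computable function to a computably compact set is uniformly computable. For step three, given any target precision $\epsilon > 0$, choose a rational partition $m = x_0 < x_1 < \cdots < x_N = M$ whose mesh is finer than $\omega$ dictates, and at each interior node compute a rational $y_i$ within $\epsilon$ of $f(x_i)$, setting $y_0 = y_N = 0$ exactly. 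Define $\psi$ to be the linear interpolant of $(x_0,y_0), \ldots, (x_N, y_N)$, extended by $0$ outside $[m,M]$. By construction $\psi$ is a rational polygonal function, $\supp{\psi} \subseteq [m,M]$, and $\|f - \psi\|_{\infty}$ is controlled by $\epsilon$ together with the modulus of continuity.

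The main obstacle is the reliable computation of the integers $m$ and $M$ in the degenerate case where $\min \supp{f}$ or $\max \supp{f}$ happens to be an integer, since floor and ceiling are discontinuous and therefore not everywhere computable on the reals. One way to handle this is to detect such a case semi-decidably by refining minimal covers until the leftmost or rightmost rational point falls within a very small neighborhood of an integer and then branching accordingly; another is to observe that any integer bracket containing $\supp{f}$ suffices for the downstream use of $\psi$, so we may take floor and ceiling of the rational endpoints of a single sufficiently fine minimal cover and work with that slightly enlarged bracket. Once $m$ and $M$ are fixed, verifying the support bound and the approximation quality is routine, and the entire procedure is uniform in the given names of $f$ and $\supp{f}$.
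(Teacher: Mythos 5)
Your construction is essentially the paper's: both compute the integer bracket from the minimal-cover name of $\supp f$ and then output a rational polygonal interpolant of $f$ that vanishes at the ends of that bracket and is extended by zero outside it. Your worry about the discontinuity of floor and ceiling is legitimate (the paper's own proof silently selects a rational in $(\lfloor\min\supp f\rfloor,\min\supp f)$, which is empty when the minimum is an integer); of your two fixes, the second one --- accepting any computable integer bracket containing $\supp f$, which is all that the application in Theorem \ref{thm:evc.equiv} requires --- is the sound one, whereas the first cannot work as stated, since nearness of a computable real to an integer is semi-decidable but integrality itself is not, so the proposed branching cannot be carried out reliably.
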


\begin{proof}
    Since $\min\{\supp{f}\}$ and $\max\{\supp{f}\}$ are computable, we may compute rationals $p\in(\lfloor\min\{\supp{f}\}\rfloor,\min\{\supp{f}\})$ and $q\in(\max\{\supp{f}\},\lceil\max\{\supp{f}\}\rceil)$. Now, compute a function $\psi\in P_{\Q}[p,q]$ that approximates $f$ on the interval $[p,q]$ with the property that $\psi(p)=\psi(q)=0$.
\end{proof}

\begin{proof}[Proof of Theorem \ref{thm:evc.equiv}]
    It is possible to compute a name of $f\in C^c_K(\R)$ and a name of $\supp{f}$ from an index of $f$ and an index of $\supp{f}$. It thus follows that every uniformly effectively vaguely convergent sequence is effectively vaguely convergent.

    Now, suppose $\{\mu_n\}_{n\in\N}$ effectively vaguely converges to $\mu$. Let $\rho$ be a name of $f\in C_K(\R)$, and let $\kappa$ be a name of $\supp{f}$. We construct a function $G:\N\rightarrow\N$ as follows. Let $I=[\lfloor\min\{\supp{f}\}\rfloor,\lceil\max\{\supp{f}\}\rceil]$, which can be computed from $\kappa$ by Lemma 5.2.6 in \cite{Weihrauch.2000}. Let $T$ be a function on $\R$ given by
    \[
        T(x)=\begin{cases}
            1&\lceil\min\{\supp{f}\}\rceil\leq x\leq\lceil\max\{\supp{f}\}\rceil\\
            x+\lceil\min\{\supp{f}\}\rceil+1&\lceil\min\{\supp{f}\}\rceil-1<x<\lceil\min\{\supp{f}\}\rceil\\
            -x+\lceil\max\{\supp{f}\}\rceil+1&\lceil\max\{\supp{f}\}\rceil<x<\lceil\max\{\supp{f}\}\rceil+1\\
            0&\text{ otherwise.}
        \end{cases}
    \]
    Note that $\supp{T}=\ol{B(I,1)}$, and so $T\in C_K(\R)$.
    Since $\{\mu_n\}_{n\in\N}$ effectively vaguely converges to $\mu$, we can compute an index $n_0\in\N$ so that $|\int_{\R}Td\mu_n-\int_{\R}Td\mu|<2^{-1}$ whenever $n\geq n_0$. It follows that for every $n,m\geq n_0$, $|\int_{\R}Td\mu_n-\int_{\R}Td\mu_m|<1$.

    By Lemma \ref{lem:rpf}, we can compute a rational polygonal function $\psi$ with the property that $\supp{\psi}\subseteq I$ and
    \[
        \max\{|f(x)-\psi(x)|:x\in I\}<\dfrac{2^{-(N+2)}}{1+\int_{\R}Td\mu_{n_0}}.
    \]
    Since $\{\mu_n\}_{n\in\N}$ effectively vaguely converges to $\mu$, we can compute an $n_1\in\N$ so that $|\int_{\R}\psi d\mu_n-\int_{\R}\psi d\mu|<2^{-(N+1)}$ whenever $n\geq n_1$. Set $G(N)=n_1$. 

    Suppose $n\geq G(N)$. Then,
    \begin{align*}
        \left|\int_{\R}fd\mu_n-\int_{\R}fd\mu\right|&\leq\left|\int_{\R}(f-\psi)d\mu_n\right|+\left|\int_{\R}\psi d\mu_n-\int_{\R}\psi d\mu\right|+\left|\int_{\R}(\psi-f)d\mu\right|\\
        &<2^{-(N+2)}+2^{-(N+1)}+2^{-(N+2)}\\
        &=2^{-N}.
    \end{align*}
    Thus, $G$ is a modulus of convergence of $\{\int_{\R}fd\mu_n\}_{n\in\N}$. Since the construction of $G$ from $\rho$ and $\kappa$ is uniform, $\{\mu_n\}_{n\in\N}$ uniformly effectively vaguely converges to $\mu$.
\end{proof}

By the same reasoning as with effective weak convergence, we may also conclude the following.

\begin{corollary}\label{cor:evc2vc}
    Suppose $\{\mu_n\}_{n\in\N}$ is a uniformly computable sequence in $\M(\R)$ that is effectively vaguely convergent. Then, $\{\mu_n\}_{n\in\N}$ is vaguely convergent.
\end{corollary}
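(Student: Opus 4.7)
The plan is to apply Theorem \ref{thm:evc.equiv} to upgrade the hypothesis to uniform effective vague convergence, and then to relativize the resulting Turing functional against arbitrary (possibly non-computable) oracle names. The corollary then falls out essentially for free, since uniformity allows us to feed \emph{any} name of any $f \in C_K(\R)$ to the procedure, not merely computable ones.

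Concretely, because $\{\mu_n\}_{n\in\N}$ is uniformly computable and effectively vaguely converges to some $\mu \in \M(\R)$, Theorem \ref{thm:evc.equiv} supplies a Turing functional $\Phi$ such that, for every compact-open name $\rho$ of $f \in C_K(\R)$ and every minimal-cover name $\kappa$ of $\supp{f}$, $\Phi^{\rho,\kappa}$ is a function $G : \N \to \N$ satisfying $\left|\int_\R f\ d\mu_n - \int_\R f\ d\mu\right| < 2^{-N}$ whenever $n \geq G(N)$.

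Now, fix an arbitrary $f \in C_K(\R)$. Continuity of $f$ guarantees that the set $\{N_{I,J} : f \in N_{I,J}\}$ is well-defined and classically enumerable, yielding a (not necessarily computable) compact-open name $\rho$ of $f$. Likewise, compactness of $\supp{f}$ furnishes a (not necessarily computable) minimal-cover name $\kappa$. Supplying $\rho$ and $\kappa$ as oracles to $\Phi$ then produces a function witnessing $\lim_n \int_\R f\ d\mu_n = \int_\R f\ d\mu$. Since $f \in C_K(\R)$ was arbitrary, $\{\mu_n\}_{n\in\N}$ vaguely converges to $\mu$.

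The sole conceptual hurdle is recognizing that the uniform procedure of Definition \ref{def:uev} is a Turing functional accepting arbitrary oracle inputs, not only indices of computable names; in particular, the inequality bounding $\left|\int_\R f\ d\mu_n - \int_\R f\ d\mu\right|$ from the proof of Theorem \ref{thm:evc.equiv} is a statement about the actual measures and actual $f$, and hence transfers verbatim under relativization. With that observation in hand, the argument is essentially a direct invocation of Theorem \ref{thm:evc.equiv}, and no further technical difficulty arises.
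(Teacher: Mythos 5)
Your proof is correct and takes essentially the same approach as the paper, which offers no explicit argument beyond the remark that the corollary follows ``by the same reasoning as with effective weak convergence'' --- namely, upgrading to uniform effective vague convergence via Theorem \ref{thm:evc.equiv} and then feeding arbitrary (possibly noncomputable) names of $f$ and $\supp{f}$ to the resulting functional as oracles. Your observation that the functional's correctness inequality concerns the actual measures and hence survives relativization is exactly the point that makes this work.
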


Recall that effective weak limit measures are computable. It is reasonable to ask if this translates to effective vague convergence as well. Below, we provide a negative answer to this question.

\begin{proposition}\label{prop:evil}
    There is a uniformly computable sequence of measures that effectively vaguely converges to a limit measure $\mu$ with the property that $\mu(\R)$ is incomputable. 
\end{proposition}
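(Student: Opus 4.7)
The plan is to exploit the fact that vague convergence only tests $\mu_n$ and $\mu$ against compactly-supported functions, so any mass that ``escapes to infinity'' in the limit is invisible to the convergence test yet still contributes to $\mu(\R)$. Hiding an incomputable amount of mass out at infinity should produce a finite but incomputable limit.

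Concretely, I would fix a left-c.e. real $r \in (0,1)$ that is not computable (for instance a Chaitin $\Omega$), together with a computable strictly increasing sequence of rationals $\{r_k\}_{k \in \N}$ with $r_0 = 0$ and $r_k \nearrow r$. Then define
\[
\mu_n := \sum_{k=1}^{n} (r_k - r_{k-1})\, \delta_k \qquad \text{and} \qquad \mu := \sum_{k=1}^{\infty} (r_k - r_{k-1})\, \delta_k,
\]
where $\delta_k$ is the Dirac measure at the integer $k$. Clearly $\mu$ is a finite Borel measure with $\mu(\R) = r$, which is incomputable by choice. I would verify that $\{\mu_n\}_{n \in \N}$ is uniformly computable by noting that $\mu_n(\R) = r_n$ is computable uniformly in $n$, and that for any $U \in \Sigma^0_1(\R)$ indexed by $e$, the quantity $\mu_n(U) = \sum_{k \in \{1,\dots,n\} \cap U} (r_k - r_{k-1})$ is left-c.e. uniformly in $n$ and $e$, by enumerating the (at most $n$) integers $k$ that appear in $U$.

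The heart of the argument is effective vague convergence. Given $f \in C^c_K(\R)$ presented via indices of $f$ and $\supp f$, I would first extract from the minimal-cover name of the computably compact set $\supp f$ an integer $N$ with $\supp f \subseteq [-N, N]$. For every $n \geq N$, the additional atoms of $\mu$ at $k > N$ lie outside $\supp f$ and hence contribute nothing, so
\[
\int_\R f\, d\mu_n = \sum_{k=1}^{N}(r_k - r_{k-1})\, f(k) = \int_\R f\, d\mu.
\]
Therefore the constant function $g \equiv N$ is trivially a modulus of convergence of $\{\int_\R f\, d\mu_n\}_{n \in \N}$, and an index for it is computed uniformly from the indices of $f$ and $\supp f$.

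There is no real obstacle here: once this example is chosen, the uniform computability of $\{\mu_n\}$ and the effective vague convergence are both essentially immediate, while the incomputability of $\mu(\R) = r$ is built in from the outset. The only small verification worth double-checking is that an integer bound on $\supp f$ can be computed from a compact-set name of $\supp f$, which is standard for computably compact subsets of $\R$.
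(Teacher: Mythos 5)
Your proposal is correct and follows essentially the same strategy as the paper's proof: the paper places weights $2^{-(a_i+1)}$ (for an effective enumeration $\{a_i\}$ of an incomputable c.e.\ set $A$) at the successive integers, so that the total mass is a Specker-sequence limit, i.e.\ exactly a left-c.e.\ incomputable real as in your construction, and likewise obtains the modulus of convergence by computing an integer bound on $\supp f$ from its name so that the integrals stabilize exactly. The two arguments differ only in notation.
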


\begin{proof}
    Let $A\subset\N$ be an incomputable c.e. set, and let $\{a_i\}_{n\in\N}$ be an effective enumeration of $A$. For each $n\in\N$, let $\mu_n=\sum_{i=0}^n2^{-(a_i+1)}\delta_i$. Note that $\mu_n$ is a uniformly computable sequence of measures. We will show that $\{\mu_n\}_{n\in\N}$ effectively vaguely converges to the measure $\mu=\sum_{i=0}^{\infty}2^{-(a_i+1)}\delta_i$.

    For starters, fix $f\in C^c_K(\R)$ and an index of $\supp{f}$. Then, we can compute $\max\{\supp{f}\}$ by Lemma 5.2.6 in \cite{Weihrauch.2000}. Observe that for all $n\geq\lceil\max\{\supp{f}\}\rceil+1$,
    \[
        \int_{\R}fd\mu_n=\sum_{i=0}^n2^{-(a_i+1)}f(i)=\sum_{i=0}^{\infty}2^{-(a_i+1)}f(i)=\int_{\R}fd\mu.
    \]
    Thus, $\max\{\supp{f}\}\rceil+1$ is an index of a modulus of convergence of the sequence $\{\int_{\R}fd\mu_n\}_{n\in\N}$. Finally, note that $\mu(\R)=\sum_{i=0}^{\infty}2^{-(a_i+1)}$ is incomputable since it is the limit of a Specker sequence.
\end{proof}

We have in the proof of Proposition \ref{prop:evil} above not only an example of an incomputable effective vague limit, but also a \emph{finite} one.
This leads us to ask the following question: when are effective vague limit measures computable? We provide a necessary and sufficient condition for which this is the case.

\begin{proposition}
    Suppose $\{\mu_n\}_{n\in\N}$ is a uniformly computable sequence in $\M(\R)$ that effectively vaguely converges to $\mu$. If $\mu(\R)$ is computable, then $\mu$ is computable.
\end{proposition}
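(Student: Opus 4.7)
The plan is to verify the definition of a computable measure for $\mu$: since $\mu(\R)$ is computable by hypothesis, it remains to exhibit a uniform procedure that, given an index $e$ of $U \in \Sigma^0_1(\R)$, produces a c.e. index for the left Dedekind cut of $\mu(U)$. The key observation is that effective vague convergence together with the uniform computability of $\{\mu_n\}_{n \in \N}$ makes $f \mapsto \int_\R f\, d\mu$ uniformly computable on $C_K^c(\R)$: from an index of $f$ and an index of $\supp f$ one obtains the sequence $\{\int_\R f\, d\mu_n\}_{n\in\N}$ together with a modulus of convergence of its limit, hence a Cauchy name of $\int_\R f\, d\mu$. In particular, for each rational polygonal function $\psi$ extended to be supported on a rational compact interval (as in Lemma \ref{lem:rpf}), $\int_\R \psi\, d\mu$ is a uniformly computable real in the data defining $\psi$.

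Given an index $e$ of $U$, I would enumerate the c.e. set $E_U = \{i \in \N : I_i \subseteq U\}$, and for every finite $F \subseteq E_U$ and every $k \in \N$ construct a rational polygonal function $\psi_{F,k}$ with $0 \leq \psi_{F,k} \leq 1$, $\supp \psi_{F,k} \subseteq \bigcup_{i\in F} I_i$, and $\psi_{F,k} \nearrow \one_{\bigcup_{i\in F} I_i}$ pointwise as $k\to\infty$; this can be arranged uniformly in $F$ and $k$. Since each $\int_\R \psi_{F,k}\, d\mu \leq \mu(U)$ is uniformly computable, I enumerate into the candidate left cut of $\mu(U)$ every rational $q$ that some stage certifies to satisfy $q < \int_\R \psi_{F,k}\, d\mu$. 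Two applications of the monotone convergence theorem, first to $\psi_{F,k} \nearrow \one_{\bigcup_{i \in F} I_i}$ as $k \to \infty$ and then to $\bigcup_{i \in F} I_i \nearrow U$ along an increasing enumeration of $E_U$, yield $\mu(U) = \sup_{F,k} \int_\R \psi_{F,k}\, d\mu$, so every rational strictly below $\mu(U)$ is eventually enumerated. The whole procedure is uniform in $e$; combined with computability of $\mu(\R)$, this gives that $\mu$ is a computable measure.

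The main obstacle is the purely combinatorial bookkeeping needed to specify the polygonal approximants $\psi_{F,k}$ and indices of their supports uniformly in a finite set $F$ of rational-interval codes and in the refinement parameter $k$, and to package this with the index $e$ to produce a single c.e. enumeration of the left cut of $\mu(U)$. Once this uniform construction is in place, correctness is immediate from monotone convergence and the uniform computability of $\int_\R \psi_{F,k}\, d\mu$ furnished by effective vague convergence.
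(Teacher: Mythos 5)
Your proposal is correct and follows essentially the same route as the paper: approximate the indicator of an open set from below by compactly supported computable functions, use effective vague convergence to compute their $\mu$-integrals, and conclude via monotone convergence that $\mu(U)$ is left-c.e.\ uniformly in an index of $U$. The only (inessential) difference is that the paper first handles single rational intervals, citing an existing increasing Lipschitz approximation of $\one_I$, and then passes to countable unions, whereas you work with finite unions of basic intervals directly.
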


\begin{proof}
    Suppose $\mu(\R)$ is computable. It suffices to show that $\mu(U)$ is left-c.e. for every $U\in\Sigma^0_1(\R)$ uniformly in an index of $U$.

    For starters, let $I$ be a rational open interval. Since $\one_{I}$ is nonnegative and lower-semicomputable, we can compute a sequence of computable Lipschitz functions $0\leq T_k\leq1$ such that $T_k$ increases to $\one_{I}$ pointwise and $\supp{T_k}=\ol{I}$ for each $k$ (see {Proposition C.7, \cite{G05}}). By the Monotone Convergence Theorem, $\nu(I)=\lim_k\int_{\R}T_kd\nu$ for any $\nu\in\M(\R)$.

    Fix $q\in\Q$. Since $\{\mu_n\}_{n\in\N}$ effectively vaguely converges to $\mu$, we can compute an index of a modulus of convergence of $\{\int_{\R}T_kd\mu_N\}_{n\in\N}$ with $\lim_n\int_{\R}T_kd\mu_n=\int_{\R}T_kd\mu$ for each $k$ from an index of $T_k$ and an index of $\supp{T_k}$. This means $\int_{\R}T_kd\mu$ is a computable real for each $k$. Thus, we enumerate $q$ into the left Dedekind cut of $\mu(I)$ if we can find $k_1,k_2\in\N$ such that $k_1<k_2$ and $\int_{\R}T_{k_1}d\mu\leq q\leq\int_{\R}T_{k_2}d\mu$. It follows that $\mu(I)$ is left-c.e. uniformly in an index of $I$.

    Now, fix $U\in\Sigma^0_1$. Then, $U$ can be expressed as a countable union of rational open intervals. By the observation above, it follows that $\mu(U)$ is the limit of an increasing sequence of left-c.e. reals. Therefore, $\mu(U)$ is left-c.e. uniformly in an index of $U$.
\end{proof}

Another way of ensuring that effective vague limits are computable is by analyzing the point in which effective weak and vague convergence coincide. Below, we provide a sufficient condition for which these notions do coincide.

\begin{theorem}\label{thm:evc2ewc}
    Suppose $\{\mu_n\}_{n\in\N}$ is uniformly computable. Suppose further that there is a computable modulus of convergence of $\{\mu_n(\R)\}_{n\in\N}$. The following are equivalent.
    \begin{enumerate}
        \item $\{\mu_n\}_{n\in\N}$ is effectively vaguely convergent.
        \item $\{\mu_n\}_{n\in\N}$ is effectively weakly convergent.
    \end{enumerate}
\end{theorem}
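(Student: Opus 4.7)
The easy direction, effective weak convergence $\Rightarrow$ effective vague convergence, is essentially immediate. Given an index of $f \in C_K^c(\R)$ together with an index of the (computably compact) set $\supp f$, one can compute $\max_{x \in \supp f} |f(x)|$ as a computable real and round up to obtain a natural-number bound $B$ on $|f|$. Effective weak convergence then supplies the desired modulus of convergence of $\{\int_\R f\, d\mu_n\}_{n \in \N}$ from the index of $f$ and $B$.

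For the converse, I plan to invoke Theorem \ref{thm:evc.equiv} to upgrade effective vague convergence to its uniform version, and to use the assumed computable modulus of $\{\mu_n(\R)\}_{n \in \N}$ to observe that $c := \mu(\R) = \lim_n \mu_n(\R)$ is a computable real. Fix $f \in C_b^c(\R)$ with bound $B \in \N$ and target accuracy $2^{-N}$. For each $K \in \N$, let $\phi_K \in C_K(\R)$ be the trapezoidal function equal to $1$ on $[-K,K]$, equal to $0$ off $[-K-1,K+1]$, and linear on the two transition segments; the family $\{\phi_K\}$ is uniformly computable with $\supp \phi_K = [-K-1,K+1]$. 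Uniform effective vague convergence applied to $\phi_K$ makes $\int_\R \phi_K\, d\mu$ a computable real uniformly in $K$, and since $\phi_K \nearrow \mathbf{1}_\R$ pointwise the monotone convergence theorem gives $\int_\R \phi_K\, d\mu \nearrow c$. A search therefore produces a $K$ with $B(c - \int_\R \phi_K\, d\mu) < 2^{-(N+3)}$.

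I would then split the error as
\[
\int_\R f\, d\mu_n - \int_\R f\, d\mu = \int_\R f(1-\phi_K)\, d\mu_n - \int_\R f(1-\phi_K)\, d\mu + \Bigl(\int_\R f\phi_K\, d\mu_n - \int_\R f\phi_K\, d\mu\Bigr).
\]
The outer two pieces are bounded in absolute value by $B(\mu_n(\R) - \int_\R \phi_K\, d\mu_n)$ and $B(c - \int_\R \phi_K\, d\mu)$ respectively, and each can be forced below $2^{-(N+2)}$ uniformly via the given modulus of $\{\mu_n(\R)\}$, the modulus of $\{\int_\R \phi_K\, d\mu_n\}$ supplied by uniform effective vague convergence, and the choice of $K$. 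The middle piece requires applying uniform effective vague convergence to the function $g := f\phi_K \in C_K(\R)$.

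The main obstacle is that we possess a c.o.-name of $g$ but not, in general, a name of $\supp g$: the zeros of $f$ are undecidable. I plan to circumvent this by inspecting the proof of Theorem \ref{thm:evc.equiv} and noting that a name of $\supp g$ is used there only to extract a rational compact interval enclosing $\supp g$ (via Lemma 5.2.6 of \cite{Weihrauch.2000}); the identical argument therefore goes through given a c.o.-name of $g$ together with any uniformly computable rational compact interval containing $\supp g$, and I would take this interval to be $[-K-1, K+1]$. Combining the three bounds by the triangle inequality then delivers a modulus of convergence of $\{\int_\R f\, d\mu_n\}_{n \in \N}$ computable uniformly from an index of $f$, the bound $B$, and $N$, which is effective weak convergence.
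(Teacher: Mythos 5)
Your overall strategy coincides with the paper's: the forward direction is dispatched by computing a bound on $|f|$ from a name of its compact support, and the converse is proved by truncating $f\in C_b(\R)$ to a computably compactly supported approximant, controlling the tails via the computable modulus of $\{\mu_n(\R)\}_{n\in\N}$, and applying effective vague convergence to the approximant. The paper packages the tail control into Lemmas \ref{lem:sc1} and \ref{lem:sc2} and, crucially, takes the approximant to be a rational polygonal function $\psi\in P_{\Q}[-a,a]$ with computably compact support, which is why it never meets the obstacle your last paragraph addresses: $\supp(f\phi_K)$ is indeed not computable from a c.o.-name of $f$, and your patch (re-inspecting the proof of Theorem \ref{thm:evc.equiv} to see that only an enclosing computably compact interval is needed) is legitimate, but it can be avoided entirely by approximating $f$ on $[-K-1,K+1]$ by a rational polygonal function vanishing at the endpoints, as in Lemma \ref{lem:rpf}.

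The genuine gap is the sentence in which you ``observe'' that $c:=\mu(\R)=\lim_n\mu_n(\R)$. Vague convergence does not preserve total mass: mass may escape to infinity, and in general one only has $\mu(\R)\leq\liminf_n\mu_n(\R)$. Consider $\mu_n=\delta_n$, which is effectively vaguely convergent to the zero measure while $\mu_n(\R)\equiv 1$ has a trivial computable modulus of convergence; there the identity fails. If $\mu(\R)<c$, then $\int_\R\phi_K\,d\mu\nearrow\mu(\R)<c$ and your search for a $K$ with $B(c-\int_\R\phi_K\,d\mu)<2^{-(N+3)}$ never terminates, so the construction breaks at its first step. This identity --- equivalently, the effective tightness statement that $\mu_n(\R\setminus[-a,a])$ can be made uniformly small, which is exactly Lemma \ref{lem:sc1} --- is the one point where the hypothesis on $\{\mu_n(\R)\}_{n\in\N}$ must do real work, and it cannot be waved through as an observation: you must either derive $\mu(\R)=c$ from the hypotheses or restructure the search so that it refers only to quantities attached to the $\mu_n$ themselves (e.g., seek $K$ and $n_0$ with $\mu_n(\R)-\int_\R\phi_K\,d\mu_n$ small for all $n\geq n_0$), which is the content the paper delegates to Lemma \ref{lem:sc1} and its citation of \cite{MR21}. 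As written, your proof is the paper's proof with its most delicate step asserted rather than proved.
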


In fact, we will prove that effective vague convergence is equivalent to \emph{uniform} effective weak convergence. The following series of lemmas will allow us to carry out a proof of Theorem \ref{thm:evc2ewc} similar to the proof of Theorem \ref{thm:ewc.equiv}.

\begin{lemma}\label{lem:1-f}
    Suppose $\{\mu_n\}_{n\in\N}$ is uniformly computable and effectively vaguely converges to $\mu$. 
    Suppose further that there is a computable modulus of convergence of $\{\mu_n(\R)\}_{n\in\N}$. 
    For every $f\in C^c_K(\R)$ such that $0\leq f\leq 1$, $\lim_n\int_{\R}(1-f)d\mu_n=\int_{\R}(1-f)d\mu$ and it is possible to compute an index of a modulus of convergence of $\{\int_{\R}(1-f)d\mu_n\}_{n\in\N}$ from an index of $f$ and an index of $\supp{f}$.
\end{lemma}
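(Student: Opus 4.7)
The plan is to exploit the linearity identity
\[
\int_\R (1-f)\, d\mu_n - \int_\R (1-f)\, d\mu = \bigl(\mu_n(\R) - \mu(\R)\bigr) - \left(\int_\R f\, d\mu_n - \int_\R f\, d\mu\right),
\]
which is valid because both $\mu_n$ and $\mu$ are finite (the latter is finite because $\mu_n(\R) \to \mu(\R)$ as real numbers by the given modulus hypothesis) and $0 \leq 1-f \leq 1$ is bounded and hence integrable. This decomposition reduces the lemma to combining two moduli of convergence: one for the total masses and one for the integrals of $f$.

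The key steps I would carry out are as follows. Given an index of $f$ and an index of $\supp f$, effective vague convergence of $\{\mu_n\}_{n\in\N}$ to $\mu$ supplies an index of a modulus of convergence $G_f$ of $\{\int_\R f\, d\mu_n\}_{n\in\N}$. The computable modulus $H$ of $\{\mu_n(\R)\}_{n\in\N}$ is fixed by the hypothesis of the lemma and does not depend on $f$. I would then define $G(N) := \max\{G_f(N+1),\, H(N+1)\}$ and observe that, for every $n \geq G(N)$, each of the two terms on the right-hand side of the displayed identity has absolute value less than $2^{-(N+1)}$, so the triangle inequality yields an error bound less than $2^{-N}$. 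Uniformity of $G$ in the input indices of $f$ and $\supp f$ follows from the uniformity of the procedures producing $G_f$ and $H$.

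There is no real obstacle here: the lemma is essentially a bookkeeping step whose content is the observation $\int_\R (1-f)\, d\nu = \nu(\R) - \int_\R f\, d\nu$ for any finite measure $\nu$, together with the fact that the extra hypothesis on $\{\mu_n(\R)\}_{n\in\N}$ supplies exactly the missing piece needed to handle the constant $1$ (which is not itself in $C_K^c(\R)$). The only thing worth underlining is that the additive splitting keeps the two error sources independent, so that the construction of $G$ remains uniform in the inputs.
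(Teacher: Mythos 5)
Your proof is correct and takes essentially the same route as the paper's: the paper likewise obtains a modulus $g_1$ for $\{\int_\R f\,d\mu_n\}_{n\in\N}$ from effective vague convergence, takes the hypothesized modulus $g_2$ for $\{\mu_n(\R)\}_{n\in\N}$, and returns $\max\{g_1(N+1),g_2(N+1)\}$. The only caveat --- shared with the paper, which leaves it implicit --- is your parenthetical claim that $\mu_n(\R)\to\mu(\R)$: the stated hypothesis only guarantees that $\{\mu_n(\R)\}_{n\in\N}$ converges to \emph{some} limit with a computable modulus, so identifying that limit with $\mu(\R)$ is a tacit additional assumption rather than a consequence of the modulus hypothesis alone.
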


\begin{proof}
    Fix $f\in C^c_K(\R)$ and an index of $\supp{f}$. By Theorem \ref{thm:evc.equiv}, we can compute a modulus of convergence $g_1:\subseteq\N\rightarrow\N$ for $\{\int_{\R}fd\mu_n\}_{n\in\N}$. By assumption, there is a computable modulus of convergence $g_2:\subseteq\N\rightarrow\N$ for $\{\mu_n(\R)\}_{n\in\N}$. Therefore, $n_1=\max\{g_1(N+1),g_2(N+1)\}$ is an index of a modulus of convergence of $\{\int_{\R}(1-f)d\mu_n\}_{n\in\N}$.
\end{proof}

\begin{lemma}\label{lem:sc1}
    Suppose $\{\mu_n\}_{n\in\N}$ is uniformly computable and effectively vaguely converges to $\mu$. 
    Suppose further that there is a computable modulus of convergence of $\{\mu_n(\R)\}_{n\in\N}$. 
    From $N\in\N$, it is possible to compute $a,n_0\in\N$ such that $\mu_n(\R\setminus[-a,a])<2^{-N}$ for all $n\geq n_0$.
\end{lemma}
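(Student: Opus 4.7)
The plan is to exhibit, for each $N$, a cutoff interval $[-a,a]$ that captures almost all the mass of the limit measure $\mu$ and then transfer this estimate to the tail masses $\mu_n$ for large $n$. I work with the rational polygonal tent functions $T_a \in C_K^c(\R)$ defined for $a \in \N$ by $T_a(x) := \max\{0, \min\{1, a+1-|x|\}\}$, which equals $1$ on $[-a,a]$ and vanishes outside $[-(a+1),a+1]$. Uniformly in $a$, one obtains an index of $T_a$ together with an index of $\supp{T_a} = [-(a+1),a+1]$, and $0 \leq T_a \leq 1$.

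First I observe that $\mu(\R) = \lim_n \mu_n(\R)$ is a computable real, since $\{\mu_n(\R)\}_{n\in\N}$ is uniformly computable with a computable modulus of convergence. Effective vague convergence applied to $T_a$ then makes $\int_\R T_a\,d\mu$ computable uniformly in $a$. Since $T_a \uparrow 1$ pointwise as $a \to \infty$ and $\mu$ is finite, the Monotone Convergence Theorem gives $\int_\R T_a\,d\mu \uparrow \mu(\R)$.

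Next I semi-decidably search over $a \in \N$ for the first $b$ with $\mu(\R) - \int_\R T_b\,d\mu < 2^{-(N+1)}$: the condition is $\Sigma^0_1$ in two computable reals, hence semi-detectable by dovetailed rational approximation, and by the convergence above some such $b$ must exist. Set $a := b+1 \in \N$. By Lemma \ref{lem:1-f} applied to $T_b$, compute an index of a modulus of convergence $g$ of $\{\int_\R (1-T_b)\,d\mu_n\}_{n\in\N}$ to $\mu(\R) - \int_\R T_b\,d\mu$, and set $n_0 := g(N+1)$. Since $\mathbf{1}_{[-(b+1),b+1]} \geq T_b$ pointwise, for $n \geq n_0$,
\[
\mu_n(\R \setminus [-a,a]) \leq \mu_n(\R) - \int_\R T_b\,d\mu_n < \Big(\mu(\R) - \int_\R T_b\,d\mu\Big) + 2^{-(N+1)} < 2^{-N},
\]
which is the required bound.

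The only delicate step is the search for $b$. It terminates precisely because $\mu(\R)$ is made computable by the hypothesis on $\{\mu_n(\R)\}_{n\in\N}$, rendering the target inequality semi-decidable; without this hypothesis one could only assert $\int_\R T_a\,d\mu \uparrow \mu(\R)$ non-effectively, and the whole construction would collapse (as one expects from Proposition \ref{prop:evil}).
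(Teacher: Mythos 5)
Your construction --- the tent functions $T_a$, monotone convergence to get $\int_\R T_a\,d\mu \uparrow \mu(\R)$, a semi-decidable search for a good cutoff $b$, and then Lemma \ref{lem:1-f} to transfer the tail estimate to the $\mu_n$ --- is exactly the truncation argument the paper is pointing at when it says to combine Lemma 2.5 with the proof of Lemma 4.5 of \cite{MR21}, and your final chain of inequalities is correct as written.

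There is, however, one genuine gap: the opening assertion that $\mu(\R) = \lim_n \mu_n(\R)$. The hypotheses only give that $\lim_n \mu_n(\R)$ exists and is computable; vague convergence yields $\int_\R T_a\,d\mu = \lim_n \int_\R T_a\,d\mu_n \leq \lim_n \mu_n(\R)$ and hence $\mu(\R) \leq \lim_n \mu_n(\R)$, but equality can fail because mass may escape to infinity. Concretely, $\{\delta_n\}_{n\in\N}$ is uniformly computable, effectively vaguely converges to the zero measure, and has $\delta_n(\R) \equiv 1$, so it satisfies every stated hypothesis while $\mu(\R) = 0 \neq 1$; the conclusion of the lemma also fails for this sequence, so no argument can close the gap without an additional assumption. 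The place where your proof actually breaks is the search for $b$: if $\mu(\R) < \lim_n \mu_n(\R)$, then the search run against the computable real $\lim_n \mu_n(\R)$ never halts, while the search run against the true $\mu(\R)$ has a test condition that is no longer semi-decidable. (The same identification is used silently inside Lemma \ref{lem:1-f}, whose conclusion $\lim_n \int_\R (1-f)\,d\mu_n = \int_\R (1-f)\,d\mu$ likewise requires it.) The fix is to read the hypothesis as ``$\{\mu_n(\R)\}_{n\in\N}$ converges \emph{to $\mu(\R)$} with a computable modulus,'' which is evidently what the paper intends --- otherwise $\{\delta_n\}_{n\in\N}$ would also refute Theorem \ref{thm:evc2ewc} --- and under that reading your argument is complete and correct. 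You should state this identification as part of the hypothesis rather than derive it.
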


\begin{proof}
    Combine Lemma 2.5 and the proof of Lemma 4.5 in \cite{MR21}.
\end{proof}

\begin{lemma}\label{lem:sc2}
    Suppose $\{\mu_n\}_{n\in\N}$ is uniformly computable and effectively vaguely converges to $\mu$. 
    Suppose further that there is a computable modulus of convergence of $\{\mu_n(\R)\}_{n\in\N}$. 
    From a name of an $f\in C_b(\R)$ and an $N,B\in\N$ so that $|f|\leq B$, it is possible to compute $a,n_1\in\N$ and $\psi\in P_{\Q}[-a,a]$ so that $\psi$ is computably compactly supported, $|\int_{\R}(f-\psi)d\mu|<2^{-N}$, and $|\int_{\R}(f-\psi)d\mu_n|<2^{-N}$ whenever $n\geq n_1$.
\end{lemma}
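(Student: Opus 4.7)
The plan is to restrict integration to a large computably compact interval $[-a,a]$ on which $f$ can be uniformly approximated by a rational polygonal function vanishing at the endpoints, using Lemma \ref{lem:sc1} to bound the mass that each $\mu_n$ places outside $[-a,a]$ and using a continuous bump function to transfer this bound to the limit measure $\mu$. This is an effective rendition of the classical reduction from $C_b(\R)$ to $C_K(\R)$.

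First, I apply Lemma \ref{lem:sc1} with parameter $N+2+\lceil\log_2(B+1)\rceil$ to compute $a_1,n_0\in\N$ such that $\mu_n(\R\setminus[-a_1,a_1])<2^{-(N+2)}/(B+1)$ for all $n\geq n_0$, and I set $a=a_1+1$. Next, I construct a rational polygonal trapezoidal bump $T\in P_{\Q}[-a,a]$ with $0\leq T\leq 1$, $T\equiv 1$ on $[-a_1,a_1]$, and $\supp T=[-a,a]$, so that $\one_{\R\setminus[-a,a]}\leq 1-T\leq\one_{\R\setminus[-a_1,a_1]}$ pointwise. Integrating against $\mu_n$ yields $\int_\R(1-T)\,d\mu_n<2^{-(N+2)}/(B+1)$ for $n\geq n_0$, and by Lemma \ref{lem:1-f} we have $\int_\R(1-T)\,d\mu_n\to\int_\R(1-T)\,d\mu$, so $\mu(\R\setminus[-a,a])\leq\int_\R(1-T)\,d\mu\leq 2^{-(N+2)}/(B+1)$. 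Using the computable modulus of $\{\mu_n(\R)\}_{n\in\N}$, I compute a bound $M\in\N$ satisfying $\mu_n(\R)\leq M$ for all $n$ and $\mu(\R)\leq M$. Finally, from the c.o.-name of $f$, I compute a modulus of uniform continuity of $f$ on the computably compact $[-a,a]$ and produce $\psi\in P_{\Q}[-a,a]$ with $\psi(\pm a)=0$ and $\sup_{x\in[-a,a]}|f(x)-\psi(x)|<2^{-(N+2)}/M$, which extended by zero is computably compactly supported.

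For any $\nu\in\{\mu\}\cup\{\mu_n:n\geq n_0\}$, splitting the integral at $[-a,a]$ and its complement and using that $\psi$ vanishes off $[-a,a]$ gives
\[
\left|\int_\R(f-\psi)\,d\nu\right|\leq\sup_{x\in[-a,a]}|f(x)-\psi(x)|\cdot\nu([-a,a])+B\cdot\nu(\R\setminus[-a,a])<2\cdot 2^{-(N+2)}<2^{-N},
\]
so $n_1=n_0$ works. The main obstacle is the passage from $\mu_n$ to $\mu$ in the second step: since $\R\setminus[-a,a]$ is not closed and no effective Portmanteau theorem is available for vague convergence, the quantitative trimming bound from Lemma \ref{lem:sc1} cannot be applied directly to $\mu$ but must be routed through integration against the continuous bump $T$, which is where the hypothesis of a computable modulus of convergence for $\{\mu_n(\R)\}_{n\in\N}$ enters via Lemma \ref{lem:1-f}.
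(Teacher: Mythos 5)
Your overall strategy---truncate to a computably compact interval, approximate by a rational polygonal function there, control the tails of the $\mu_n$ via Lemma \ref{lem:sc1}, and transfer the tail bound to $\mu$ by integrating a continuous bump and invoking Lemma \ref{lem:1-f}---is the right one, and it is surely what the paper intends (its own proof merely defers to a modification of Lemma 4.6 of \cite{MR21}, so your write-up is in any case more explicit). The bump-transfer step and the computation of the mass bound $M$ are fine.

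There is, however, a genuine gap in the construction of $\psi$. You require simultaneously that $\psi(\pm a)=0$ and that $\sup_{x\in[-a,a]}|f(x)-\psi(x)|<2^{-(N+2)}/M$. These conditions are incompatible in general: at $x=\pm a$ the second forces $|f(\pm a)|<2^{-(N+2)}/M\leq 2^{-(N+2)}$, but $f$ is an arbitrary element of $C_b(\R)$ with $|f|\leq B$ and may satisfy $|f(\pm a)|=B$ (take $f\equiv 1$: no function vanishing at $\pm a$ is uniformly within $1/4$ of $f$ on $[-a,a]$). Hence the first term of your final display cannot be bounded as claimed. The repair is the standard collar trick: demand the uniform approximation of $f$ only on an inner interval and let $\psi$ ramp linearly to $0$ on a collar. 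Concretely, obtain $a_2,n_0$ from Lemma \ref{lem:sc1}, set $a_1=a_2+1$ and $a=a_1+1$, and take $\psi\in P_{\Q}[-a,a]$ with $\psi(\pm a)=0$, $|\psi|\leq B+1$, and $\sup_{x\in[-a_1,a_1]}|f(x)-\psi(x)|$ small; on $\R\setminus[-a_1,a_1]$ one only has $|f-\psi|\leq 2B+1$, so this must be paired with a bound on $\nu(\R\setminus[-a_1,a_1])$ for each relevant $\nu$. Note that this in turn forces you to rerun your bump argument with a bump supported in $[-a_1,a_1]$ and equal to $1$ on $[-a_2,a_2]$, so as to bound $\mu(\R\setminus[-a_1,a_1])$ rather than $\mu(\R\setminus[-a,a])$; the $T$ you constructed only controls the latter.
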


\begin{proof}
    We modify the proof of Lemma 4.6 in \cite{MR21} so that $\psi$ is computably compactly supported. Then, use a name of $\psi$ and a name of $\supp{\psi}$ to compute $n_1\in\N$ with the desired properties.
\end{proof}

\begin{proof}[Proof of Theorem \ref{thm:evc2ewc}]
    It is immediate that $(2)$ implies $(1)$. Now, suppose $\{\mu_n\}_{n\in\N}$ effectively vaguely converges to $\mu$. Fix $f\in C_b(\R)$ with name $\rho$ and bound $B\in\N$. We construct the function $G:\N\rightarrow\N$ as follows. By means of Lemma \ref{lem:sc2}, we can compute $a,n_1\in\N$ and $\psi\in P_{\Q}[-a,a]$ so that $\psi$ is computably compactly supported, $|\int_{\R}(f-\psi)d\mu|<2^{-N}$, and $|\int_{\R}(f-\psi)d\mu_n|<2^{-N}$ whenever $n\geq n_1$. Since $\{\mu_n\}_{n\in\N}$ effectively vaguely converges to $\mu$, we can compute an $n_2\in\N$ so that $|\int_{\R}\psi d\mu_n-\int_{\R}\psi d\mu|<2^{-(N+1)}$ whenever $n\geq n_2$. Set $G(N)=n_2$.

    Suppose $n\geq G(N)$. Then,
    \begin{align*}
        \left|\int_{\R}fd\mu_n-\int_{\R}fd\mu\right|&\leq\left|\int_{\R}(f-\psi)d\mu_n\right|+\left|\int_{\R}\psi d\mu_n-\int_{\R}\psi d\mu\right|+\left|\int_{\R}(\psi-f)d\mu\right|\\
        &<2^{-(N+2)}+2^{-(N+1)}+2^{-(N+2)}\\
        &=2^{-N}.
    \end{align*}
    Thus, $G$ is a modulus of convergence of $\{\int_{\R}fd\mu_n\}_{n\in\N}$. Since the construction of $G$ from $\rho$ and $B$ is uniform, $\{\mu_n\}_{n\in\N}$ uniformly effectively weakly converges to $\mu$. The result follows by Theorem \ref{thm:ewc.equiv}.
\end{proof}

We conclude this section by deriving from Theorem \ref{thm:evc2ewc} the following effective version of a classical result in probability theory.

\begin{corollary}\label{cor:evc2ewc.prob}
    Suppose $\{\mu_n\}_{n\in\N}$ is a uniformly computable sequence of probability measures. The following are equivalent.
    \begin{enumerate}
        \item $\{\mu_n\}_{n\in\N}$ is effectively vaguely convergent.
        \item $\{\mu_n\}_{n\in\N}$ is effectively weakly convergent.
    \end{enumerate}
\end{corollary}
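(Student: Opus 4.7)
The plan is a direct appeal to Theorem \ref{thm:evc2ewc}. The only thing to check is that the hypothesis of that theorem, namely the existence of a computable modulus of convergence for $\{\mu_n(\R)\}_{n\in\N}$, is automatically satisfied in the probability-measure setting. Since each $\mu_n$ is a probability measure, $\mu_n(\R) = 1$ for every $n \in \N$, so $\{\mu_n(\R)\}_{n\in\N}$ is the constant sequence with value $1$. A trivial computable modulus of convergence for this sequence is given by $g(N) = 0$ for all $N$, because $|1 - 1| = 0 < 2^{-N}$. With this verification in hand, Theorem \ref{thm:evc2ewc} immediately yields the equivalence of (1) and (2).

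There is essentially no obstacle here; the corollary is simply the specialization of Theorem \ref{thm:evc2ewc} to the case in which the total-mass hypothesis holds automatically. It is worth emphasizing, however, the conceptual content: this furnishes a genuinely effective counterpart to the classical theorem that weak and vague convergence agree on sequences of probability measures (see \cite{C01}), and it clarifies the role of the total-mass hypothesis in Theorem \ref{thm:evc2ewc} as the effective replacement for the normalization that is free in the probabilistic setting.
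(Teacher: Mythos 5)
Your proof is correct and matches the paper's approach: the corollary is stated as an immediate consequence of Theorem \ref{thm:evc2ewc}, with the total-mass hypothesis trivially satisfied because $\mu_n(\R)=1$ is a constant (hence computably convergent) sequence. Nothing further is needed.
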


\section{Conclusion}

We expanded the effective framework for the study of weak convergence of measures in $\M(\R)$ introduced in \cite{MR21} by demonstrating the equivalence between effective weak convergence and effective convergence in the Prokhorov metric. 
This provides further evidence that effective weak convergence is the appropriate analogue to classical weak convergence in $\M(\R)$. 
While the Prokhorov metric is useful in defining $\M(\R)$ as a computable metric space, effective weak convergence is a more useful tool to analyze properties of $\M(\R)$ as a computable metric space. 
Theorem \ref{thm:cpm}, therefore, unifies the approaches in \cite{HR09b} and \cite{MR21} to studying the effective theory of weak convergence in $\M(\R)$.

Additionally, we introduced two effective notions of vague convergence in $\M(\R)$. 
While the moduli of convergence in the first definition are produced for computable functions in $C_K(\R)$, moduli of convergence in the second definition are produced for all functions in $C_K(\R)$ via names. 
Similar to effective weak convergence, Theorem \ref{thm:evc.equiv} shows that they are equivalent. 
Just as in the classical sense, however, there are notable differences between effective weak convergence and effective vague convergence.

Consider the following classical example. 
The sequence $\{\delta_n\}_{n\in\N}$ of point masses converges vaguely to the zero measure, but it does not converge weakly since $\lim_nf(n)=\infty$ for any $f\in C_b(\R)$ supported on $\R$. 
This distinction carries over in the effective setting, albeit in a more computationally significant manner. 
In Proposition \ref{prop:evil}, we gave an example of a uniformly computable sequence of measures that effectively vaguely converges to a finite incomputable measure. 
Thus, the ``vagueness'' of effective vague convergence is present in the fact that limits under this convergence notion may not be computable even when finite.

Nevertheless, we provide evidence that effective vague convergence is the appropriate computable analogue to classical vague convergence. 
For instance, we found in Theorem \ref{thm:evc2ewc} a sufficient condition for which effective weak and vague convergence coincide.
Consequently, Corollary \ref{cor:evc2ewc.prob} provides the following observation: whereas classical weak and vague convergence coincide at the probability measures, effective weak and vague convergence coincide at the computable probability measures. 
Since we argue that effective weak convergence is the appropriate computable analogue to classical weak convergence, a similar argument follows in the case of effective vague convergence. 
In the future, we would like to generalize the definitions of effective weak and vague convergence to measures in $\M(X)$ for an arbitrary computable metric space $X$.

\section{Acknowledgements}

We would like to thank Timothy McNicholl for proofreading and providing several valuable comments and suggestions.

\section{Declarations}

The author did not receive support from any organization for the submitted work. The author has no financial or proprietary interests in any material discussed in this article. Data sharing is not applicable to this article as no datasets were generated or analyzed during the current study.

\bibliographystyle{plain}
\bibliography{drbib}
    
\end{document}